\colorlet{symbols}{blue!90!black}
\def\symbol#1{\textcolor{symbols}{#1}}
\def\1{\mathbf{\symbol{1}}}
\colorlet{testcolor}{green!60!black}
\definecolor{darkred}{rgb}{0.9,0.1,0.1}
\def\comment#1{\ifthenelse{\isodd{\value{page}}}{\marginpar{\raggedright\scriptsize{\textcolor{darkred}{#1}}}}{\marginpar{\raggedleft\scriptsize{\textcolor{darkred}{#1}}}}}  
\def\EE{{\mathscr E}}
\def\FF{{\mathscr F}}
\def\s{{\mathrm{s}}}
\def\e{{\mathrm{e}}}
\def\m{{\mathfrak{m}}}
\def\limsup{\mathop{\overline{\mathrm{lim}}}}
\def\liminf{\mathop{\underline{\mathrm{lim}}}}
\def\${|\!|\!|}
\def\l|{\left|\!\left|\!\left|}
\def\r|{\right|\!\right|\!\right|}
\begin{document}

\title{Snapping out Walsh's Brownian motion and related stiff problem}
\author{Liping Li$^1$, Wenjie Sun$^2$}
\institute{RCSDS, HCMS, Academy of Mathematics and Systems Science, Chinese Academy of Sciences, \email{liliping@amss.ac.cn} \and Fudan University, \email{wjsun14@fudan.edu.cn}}

\maketitle

\begin{abstract}
Firstly, we shall introduce the so-called snapping out Walsh's Brownian motion and present  its relation with Walsh's Brownian motion. Then the stiff problem related to Walsh's Brownian motion will be described and we shall build a phase transition for it. The snapping out Walsh's Brownian motion corresponds to the so-called semi-permeable pattern of this stiff problem.
\end{abstract}

\keywords{Stiff problems, Dirichlet forms, Mosco convergences, Walsh's Brownian motion.\footnote{\textbf{MSC2010}: 31C25, 60J25, 60J45, 60J50.}}

\tableofcontents

\section{Introduction}

In a previous work \cite{LS18}, we studied the stiff problems in one-dimensional space by means of Dirichlet forms. Let us briefly explain these two terminologies `stiff problem' and `Dirichlet form'. The former one was used in \cite{SanchezPalencia:1980dg} for an interesting problem related to a thermal conduction model with a `singular' barrier. A barrier means a small domain, in which the given thermal conductivity is also very small. In short, it is mainly concerned with the behaviour of the flux (i.e. the solution to the heat equation) as the volume of the barrier decreases to $0$. The latter one is a symmetric closed form with Markovian property on an $L^2(E,m)$ space, where $E$ is a nice topological space and $m$ is a fully supported Radon measure on it. Dirichlet forms are closely related to symmetric Markov processes under the regular condition due to a series of important works by M. Fukushima and some others in 1970's. We refer more details to \cite{CF12, FOT11}. 

As we know, thermal conduction models also link Markov processes (more exactly, diffusion processes) very closely. A recent study \cite{L16} by A. Lejay took the probabilistic description of a stiff problem into account. It explored a special one-dimensional case such that the material has constant thermal conductivity out of the barrier, and we called it the Brownian case of stiff problem in \cite{LS18}. The main purpose of \cite{L16} was to introduce the so-called snapping out Brownian motion (SNOB in abbreviation) and to link it with the limit of the flux as the length of the barrier tends to $0$.

In \cite{LS18}, we reconsidered the general stiff problems in one-dimensional space in a  different way. It was highlighted that the thermal resistance, rather than the thermal conductivity, plays the essential role in these problems. At a heuristic level, the thermal resistance corresponds to the scale function of one-dimensional diffusion related to the thermal conduction. Scale function, with so-called speed measure and so-called killing measure together, characterizes a `nice' diffusion on $\mathbb{R}$ or an interval completely. It is a continuous and strictly increasing function. The main result of \cite{LS18} showed that the behaviour of the flow depends on the total thermal resistance of the barrier, which is defined roughly as follows. Let $I_\varepsilon:=(-\varepsilon,\varepsilon)$ be the barrier and $\lambda_\varepsilon$ the measure induced by the scale function related to the thermal conduction model with this barrier. Then $\bar{\gamma}:=\lim_{\varepsilon\rightarrow 0}\lambda_\varepsilon(I_\varepsilon)$ is called the total thermal resistance of the barrier. More precisely, we built a phase transition for the stiff problems in terms of $\bar{\gamma}$: If $\bar{\gamma}=\infty$, then the flow cannot cross the barrier; if $0<\bar{\gamma}<\infty$, then the flow can penetrate the barrier partially and if $\bar{\gamma}=0$, then the barrier makes no sense. Except for the last case, the flux becomes discontinuous at the barrier. Needless to say, the most interesting case is the semi-permeable one, i.e. $0<\bar{\gamma}<\infty$. In fact, the Brownian case in \cite{L16} is actually a special semi-permeable case, and the snapping out Brownian motion  
manifests partial penetrations and partial reflections (at the barrier) in its own. We also generalized the SNOB to the so-called snapping out Markov processes in \cite{LS18} for the probabilistic counterparts of semi-permeable cases in general stiff problems. 

In this paper, we shall move on to the study of a speical stiff problem in multi-dimensional space. We find that the approach centred upon snapping out Markov processes is also suitable for a model related to Walsh's Brownian motion (WBM in abbrevation) on $\mathbb{R}^2$. The Walsh's Brownian motion, raised by J. Walsh in \cite{W78}, is a diffusion process on $\mathbb{R}^2$. But it is more like a one-dimensional diffusion, since it walks on each ray starting from the origin separately if the origin is ruled out. The origin is a switching point, which pieces together all the separate parts on the rays. Thus we could take the following stiff problem into account. Let $B(0,\varepsilon):=\{x\in \mathbb{R}^2: |x|<\varepsilon\}$ and we make the thermal conductivity  become very small in $B(0,\varepsilon)$. Intuitively speaking, a new diffusion process is obtained by attaching a small barrier (at $B(0,\varepsilon)$) to WBM. Then what matters in the stiff problem related to WBM is the limit of this diffusion process as $\varepsilon\rightarrow 0$. 

Two main results will be presented in this article. The first one is to introduce the so-called snapping out Walsh's Brownian motion and to derive its Dirichlet form in Theorem~\ref{PRO314}. This is a direct application of \cite[\S3]{LS18}. We also present an interesting link between the snapping out Walsh's Brownian motion and Walsh's Brownian motion by involving two kinds of transforms: one is the time-change and the other is the darning transform. The second result builds a phase transition of the stiff problem related to WBM in Theorem~\ref{THM51}. As an analogue of that in \cite[\S4]{LS18}, this phase transition is also based on the total thermal resistance of the barrier. Moreover, the continuity of phase transition is further derived in Theorem~\ref{THM43}. Note that the snapping out Walsh's Brownian motion actually corresponds to the semi-permeable pattern of this model.

\section{A review of snapping out Markov processes}\label{SNMP}

In \cite{LS18}, the so-called snapping out Markov process was introduced to describe the semi-permeable pattern of thermal conduction. For readers' convenience, we review its definition and main concerns in this short section.

Let $E$ be a locally compact separable metric space and $m$ a positive Radon measure fully supported on $E$. Further let $(\EE,\FF)$ be a regular Dirichlet form on $L^2(E,m)$ associated with a Markov process $X=(X_t)_{t\geq 0}$. Each function in a Dirichlet space is taken to be a quasi-continuous version. 
Take a positive, finite smooth measure $\mu$ on $E$. Then the \emph{snapping out Markov process}, denoted by $X^\s$, brings into play two transforms as follows:
\begin{itemize}
\item[(1)] Killing transform induced by $\mu$ on $X$: We refer this transform to \cite[\S6.1]{FOT11} (or \cite[\S2]{LS18}). The subprocess after killing is denoted by $X^\mu=(X^\mu_t)_{t\geq 0}$. Note that the Dirichlet form of $X^\mu$ is the so-called perturbed Dirichlet form:
\[
	\begin{aligned}
  \FF^\mu &=\FF\cap L^2(E,\mu),\\
 \EE^\mu(f,g)&=\EE(f,g)+\int_E fgd\mu,\quad f,g\in \FF^\mu.
 \end{aligned}
\]
\item[(2)] Piecing out transform with instantaneous distribution $\mu^{\#}:=\mu/\mu(E)$ on $X^\mu$: This transform was introduced in \cite{IM74}, and also reviewed in \cite[\S2]{LS18}. We only give a brief explanation at a heuristic level here. Loosely speaking,  it pieces together a new trajectory starting from a reborn site, which is chosen randomly according to $\mu^\#$, once the trajectory of $X^\mu$ dies. In some sense, piecing out transform could be treated as the converse of killing.
\end{itemize}
In other words, $X^\s$ is, by definition, a Markov process obtained by applying the piecing out transform on $X^\mu$. It was shown in \cite{LS18} that if $X$ has no killing inside, then $X^\s$ is $m$-symmetric. Furthermore, the Dirichlet form of $X^\s$ is 
\begin{equation}\label{EQ3FSU}
 \begin{aligned}
  \FF^\mathrm{s}&=\left\{u\in \FF: \int_{E\times E}\left(u(x)-u(y)\right)^2\mu(dx)\mu(dy)<\infty\right\}, \\
 \EE^\mathrm{s}(u,v)&=\EE(u,v)+\frac{1}{2|\mu|}\int_{E\times E}\left(u(x)-u(y)\right)\left(v(x)-v(y)\right)\mu(dx)\mu(dy),\quad u,v\in \FF^\s,
 \end{aligned}
\end{equation}
where $|\mu|:=\mu(E)$. 


In \cite{LS18}, only snapping out Markov processes on $\mathbb{G}:=(-\infty, 0-]\cup [0+,\infty)$, in which $0+$ and $0-$ formally identified with $0\in \mathbb{R}$ are two isolated points, are paid attentions. They are the probabilistic counterparts of semi-permeable patterns appeared in the phase transitions of stiff problems in one-dimensional space (Cf. \cite[Theorem~4.6]{LS18}). Instead, we shall formulate the so-called snapping out Walsh's Brownian motion on a two-dimensional space in next section. Needless to say, it is also motivated by a related stiff problem. 

\section{Snapping out Walsh's Brownian motion}\label{SEC33}

This section is devoted to the studies of snapping out Walsh's Brownian motion (SNOWB in abbreviation) and its connections with Walsh's Brownian motion (WBM in abbreviation).

\subsection{Walsh's Brownian motion}

Following \cite{CF15}, write $\mathbb{R}^2=\cup_{\theta\in [0,2\pi)}R_\theta$, where $R_\theta$ is a ray starting from the origin $\bf{0}$ with the angle $\theta$. Let $\eta$ be a fully supported probability measure on $S^1:=[0,2\pi)$ and $m(dx):=dr\eta(d\theta)$, where $x=(r,\theta)$ is the polar coordinate of $x\in \mathbb{R}^2$. 
The Walsh's Brownian motion introduced by Walsh \cite{W78} is a diffusion process on $\mathbb{R}^2$. It behaves like a one-dimensional Brownian motion on each $R_\theta$ away from the origin, and in case of hitting the origin, it may choose a new direction $\theta'$ according to $\eta$ and go on walking like a Brownian motion on $R_{\theta'}$ until it hits the origin again. More probabilistic descriptions of WBM are referred to \cite{BPY89, W78}. 

Instead, we note that Chen et al. reconstructed the WBM by means of Dirichlet form in \cite{CF15}. Precisely speaking, it is associated with a regular Dirichlet form $(\EE^W,\FF^W)$ on $L^2(\mathbb{R}^2,m)$ as follows
\begin{equation}\label{EQ3FWF}
\begin{aligned}
 & \begin{aligned}
  \FF^W=\bigg\{ f
  & \in L^2(\mathbb{R}^2,m): 
 f_{\theta}\in H^1\left((0,\infty)\right)~
\mbox{and}~\lim_{r\rightarrow0}f_{\theta}(r)=c ~\mbox{for}~\eta\mbox{-a.e.}~\theta\\
&\qquad\qquad\quad  \mbox{and some}~c~\mbox{independent of}~\theta, \int_{S^1}\mathbf{D}(f_{\theta},f_{\theta})\eta(d\theta)<\infty\bigg\}, 
\end{aligned}
\\
 & \EE^W(f,g)=\frac{1}{2}\int_{S^1}\mathbf{D}(f_{\theta},g_{\theta})\eta(d\theta),\quad f,g\in \FF^W,
 \end{aligned}
\end{equation}
where 
\[
\mathbf{D}(f_\theta,g_\theta):=\int_0^{\infty}f'_\theta(r)g'_\theta(r)dr,
\] 
and $f_\theta(r):=f(r,\theta)$ for any function $f$ on $\mathbb{R}^2=[0,\infty)\times S^1$. 
Its extended Dirichlet space is 
\begin{equation}\label{EQ3FWE}
\begin{aligned}
  \FF^W_\e=\bigg\{ f: 
 f_{\theta}\in & \text{BL}((0,\infty))~\mbox{and}~\lim_{r\rightarrow0}f_{\theta}(r)=c ~\mbox{for}~\eta\mbox{-a.e.}~\theta\\
& \mbox{and some}~c~\mbox{independent of}~\theta, \int_{S^1}\mathbf{D}(f_{\theta},f_{\theta})\eta(d\theta)<\infty\bigg\}, 
\end{aligned}
\end{equation}
where 
\[\text{BL}((0,\infty))=\{h:h~\mbox{is absolutely continuous on}~(0,\infty), \mathbf{D}(h,h)<\infty\}.\]
Particularly, $(\EE^W,\FF^W)$ is irreducible and recurrent. 

\subsection{Snapping out WBM}

Let us turn our attentions to the SNOWB. The state space of SNOWB is $\mathbb{G}^2$ obtained by viewing the origin $\bf{0}$ of $\mathbb{R}^2$ as a circle, which is homeomorphic to $S^1$. In other words, 
\[
	\mathbb{G}^2:=[0,\infty)\times S^1,
\]
but $(0,\theta_1), (0,\theta_2)$ are distinct points if $\theta_1\neq\theta_2$. Equivalently, $\mathbb{G}^2$ is topologically homeomorphic to $[1,\infty)\times S^1$ via the transform
\[
	\mathbf{T}_{1}: \mathbb{G}^2\rightarrow [1,\infty)\times S^1,\quad (r, \theta)\mapsto (r+1, \theta). 
\]
We should write 
\[
	\mathbb{G}^2=\mathbb{G}_+\times S^1=[0+,\infty)\times S^1
\]
if no confusion caused.  Clearly, $m(dx)=dr\eta(d\theta)$ is a fully supported Radon measure on $\mathbb{G}^2$. 

To introduce the SNOWB, we start with a reflecting WBM on $\mathbb{G}^2$, which is a union of separate reflecting Brownian motions on the rays of $\mathbb{G}^2$.  It is not irreducible and given by the Dirichlet form in the following lemma. 

\begin{lemma}\label{LM31}
The quadratic form 
\begin{equation}\label{EQ4FFL}
 \begin{aligned}
 & \FF=\left\{f\in L^2(\mathbb{G}^2,m): 
  f_{\theta}\in H^1\left([0+,\infty)\right) ~\mbox{for}~\eta\mbox{-a.e.}~\theta,~\mbox{and }
\int_{S^1}\mathbf{D}(f_{\theta},f_{\theta})\eta(d\theta)<\infty\right\},
\\
 & \EE(f,g)=\frac{1}{2}\int_{S^1}\mathbf{D}(f_{\theta},g_{\theta})\eta(d\theta),\quad f,g\in \FF,
 \end{aligned}
\end{equation}
is a regular Dirichlet form on $L^2(\mathbb{G}^2,m)$. The extended Dirichlet space of $(\EE,\FF)$ is 
\begin{equation}\label{EQ3FEFF}
  \FF_\e=\bigg\{ f: 
 f_{\theta}\in \text{BL}([0+,\infty))~\mbox{for}~\eta\mbox{-a.e.}~\theta,\text{ and } \int_{S^1}\mathbf{D}(f_{\theta},f_{\theta})\eta(d\theta)<\infty\bigg\}, 
\end{equation}
where 
\[\text{BL}([0+,\infty))=\{h:h~\mbox{is absolutely continuous on}~[0+,\infty), \mathbf{D}(h,h)<\infty\}.
\]
Particularly, $(\EE,\FF)$ is recurrent. 
\end{lemma}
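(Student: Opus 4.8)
The plan is to exploit the fiber (direct-integral) structure of $(\EE,\FF)$ over $(S^1,\eta)$. For each fixed $\theta$, the form $\tfrac12\mathbf{D}(\cdot,\cdot)$ with domain $H^1([0+,\infty))$ is the classical regular Dirichlet form of reflecting Brownian motion on the half-line $[0+,\infty)$; its extended Dirichlet space is $\text{BL}([0+,\infty))$ and it is recurrent, since the constant function $1$ lies in that extended space and is annihilated by $\mathbf{D}$. Writing $L^2(\mathbb{G}^2,m)=\int^\oplus_{S^1}L^2([0+,\infty),dr)\,\eta(d\theta)$, the form $(\EE,\FF)$ is precisely the direct integral of these fiber forms. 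I would first record these one-dimensional facts and then lift each required property fiberwise.

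For closedness, given an $\EE_1$-Cauchy sequence $(f_n)$ with $L^2$-limit $f$, Fubini yields $f_{n,\theta}\to f_\theta$ in $L^2([0+,\infty))$ for $\eta$-a.e.\ $\theta$ along a subsequence; since each fiber form is complete on $H^1$ and the $\eta$-integral controls the fibers, Fatou's lemma gives $f_\theta\in H^1$ for $\eta$-a.e.\ $\theta$ with $\int_{S^1}\mathbf{D}(f_\theta,f_\theta)\eta(d\theta)<\infty$, so $f\in\FF$ and $\EE_1(f_n-f,f_n-f)\to 0$. The Markovian property is immediate: for a normal contraction $T$ the inequality $\mathbf{D}(Tf_\theta,Tf_\theta)\le\mathbf{D}(f_\theta,f_\theta)$ holds in each fiber, and integrating against $\eta$ gives $Tf\in\FF$ with $\EE(Tf,Tf)\le\EE(f,f)$. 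Density of $\FF$ in $L^2(\mathbb{G}^2,m)$ follows since product functions $g(r)h(\theta)$, with $g$ in a dense subclass of $L^2([0+,\infty),dr)$ lying in $H^1$ with compact support and $h\in L^2(S^1,\eta)$, span a dense subspace.

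For regularity I would exhibit a core inside $\FF\cap C_c(\mathbb{G}^2)$. Using the homeomorphism $\mathbf{T}_1:\mathbb{G}^2\to[1,\infty)\times S^1$, I take the linear span $\mathcal{C}$ of functions $(r,\theta)\mapsto g(r)h(\theta)$ with $g\in C_c^\infty([0+,\infty))$ and $h\in C(S^1)$; these are continuous, have compact support on $\mathbb{G}^2$, and lie in $\FF$ because $\int_{S^1}\mathbf{D}(g h,g h)\eta(d\theta)=\mathbf{D}(g,g)\int_{S^1}h^2\,d\eta<\infty$. Sup-norm density of $\mathcal{C}$ in $C_c(\mathbb{G}^2)$ is Stone--Weierstrass on the cylinder, while $\EE_1$-density is obtained fiberwise by first truncating in $r$ to compact support and then mollifying each ray. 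The crux — and the step I expect to be the main obstacle — is to carry out these approximations \emph{simultaneously and measurably in $\theta$} so that the $\eta$-integral of the fiber errors tends to $0$; I would handle this by choosing the truncation and mollification parameters independently of $\theta$ and invoking dominated convergence, using that $C_c^\infty$ is $H^1$-dense in each fiber with a uniform dominating bound.

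Finally, for the extended Dirichlet space and recurrence, I would identify $\FF_\e$ with the $\EE$-closure of $\FF$ among $m$-a.e.\ approximable functions: an $\EE$-Cauchy, $m$-a.e.\ convergent sequence has fiber limits in $\text{BL}([0+,\infty))$, and the condition $\int_{S^1}\mathbf{D}(f_\theta,f_\theta)\eta(d\theta)<\infty$ is preserved by Fatou, giving the inclusion into the right-hand side of (\ref{EQ3FEFF}); conversely every such $f$ is $\EE$-approximated by its $r$-truncations, placing it in $\FF_\e$. Recurrence then follows from the standard criterion: the constant $1$ satisfies $1_\theta\in\text{BL}([0+,\infty))$ and $\int_{S^1}\mathbf{D}(1,1)\eta(d\theta)=0$, so $1\in\FF_\e$ with $\EE(1,1)=0$.
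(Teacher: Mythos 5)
Your fiberwise strategy is viable, and for closedness, the extended Dirichlet space, and recurrence it essentially reproduces the paper's argument. The genuine problem is in the regularity step. You propose to prove $\EE_1$-density of the span $\mathcal{C}$ of products $g(r)h(\theta)$ by truncating in $r$ and then mollifying along each ray, with parameters chosen independently of $\theta$. But a generic $f\in\FF$ has no regularity whatsoever in $\theta$ — it is merely measurable in that variable — and convolution along rays does nothing to change this. The output of your truncation-plus-mollification is smooth in $r$ but still only measurable in $\theta$; it is therefore neither a finite linear combination of products $g(r)h(\theta)$ nor even an element of $C_c(\mathbb{G}^2)$. So the ``crux'' you flag is not a bookkeeping issue about simultaneous measurability that dominated convergence can absorb: your scheme never approximates the $\theta$-dependence at all, and no amount of fiberwise smoothing can produce membership in $\mathcal{C}$. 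A correct repair along your own lines is to use that $(\FF,\EE_1)$ is isometric to the Bochner space $L^2(S^1,\eta;H^1([0+,\infty)))$: simple functions $\sum_i 1_{A_i}(\theta)g_i(r)$ are dense there, each $g_i$ can be replaced by an element of $C_c^\infty([0+,\infty))$, and each indicator $1_{A_i}$ can be approximated in $L^2(\eta)$ by elements of $C(S^1)$, using the identity $\EE_1(gh,gh)=\bigl(\tfrac12\mathbf{D}(g,g)+\|g\|_{L^2(dr)}^2\bigr)\|h\|_{L^2(\eta)}^2$. The paper avoids constructing approximants altogether: it proves $\EE_1$-density of $C_c^\infty(\mathbb{G}^2)$ by a Hilbert-space duality argument, taking $f\in\FF$ with $\EE_1(f,g)=0$ for all test functions, specializing to products $\phi(r)\psi(\theta)$, and using a countable $H^1$-dense family $\{\phi_n\}$ to conclude $f_\theta=0$ for $\eta$-a.e.\ $\theta$. (Your Stone--Weierstrass remark for sup-norm density is fine and is a part the paper leaves implicit.)

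There is also a smaller gap in your converse inclusion for $\FF_\e$: you claim every $f$ in the right-hand side of \eqref{EQ3FEFF} is $\EE$-approximated by its $r$-truncations. If $f$ is unbounded in $\theta$ — for instance $f(r,\theta)=c(\theta)$ with $c$ measurable but $c\notin L^2(\eta)$, which does lie in the asserted extended space since $\mathbf{D}(f_\theta,f_\theta)=0$ — then the $r$-truncation $f\varphi_n$ fails to be in $L^2(\mathbb{G}^2,m)$, hence is not in $\FF$, and your approximating sequence does not exist. One must first reduce to bounded functions, which is exactly why the paper invokes \cite[Lemma~1.1.12]{CF12} and runs the cutoff argument only for bounded elements of the right-hand side.
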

\begin{proof}
Clearly, $(\EE,\FF)$ is a symmetric bilinear form with Markovian property. It suffices to prove the closeness and regularity. The idea of these proofs is due to \cite{CF15}. 
To prove the closeness of \eqref{EQ4FFL}, let  $\{u_n\}$ be an $\EE_1$-Cauchy sequence in $\FF$. By taking a subsequence if necessary, we may assume $\EE_1(u_{n+1}-u_n,u_{n+1}-u_n)<2^{-n}$, so that
\[
\sum_{n=1}^{\infty}\EE_1(u_{n+1}-u_n,u_{n+1}-u_n)<\infty.
\]
By Cauchy-Schwarz inequality and Fubini theorem, we have
\[
\int_{S_1}\sum_{n=1}^{\infty}\left(\int_0^{\infty}\left(u_{n+1,\theta}(r)-(u_{n,\theta}(r)\right)^2+\left(u'_{n+1,\theta}(r)-(u'_{n,\theta}(r)\right)^2dr\right)^{\frac{1}{2}}\eta(d\theta)<\infty,
\]
where $u_{n,\theta}(r):=u_n(r,\theta)$. This implies that there exists a set $A\subset S^1$ with $\eta(A)=0$ such that for every $\theta\in S^1\setminus A$, (recall that $\mathbb{G}_+=[0+,\infty)$)
\[
\sum_{n=1}^{\infty}\|u_{n+1,\theta}-u_{n,\theta}\|_{H^1(\mathbb{G}_+)}<\infty.
\]
Thus a function $u_{\theta}\in H^1(\mathbb{G}_+)$ exists for any $\theta\in S^1\setminus A$ such that $\|u_{n,\theta}-u_{\theta}\|_{H^1(\mathbb{G}_+)}\rightarrow 0$ as $n\rightarrow 0$. It follows from Fatou lemma that
\[\int_{S^1}\|u_{\theta}\|_{H^1(\mathbb{G}_+)}\eta(d\theta)<\infty,\quad \int_{S^1}\|u_{n,\theta}-u_{\theta}\|_{H^1(\mathbb{G}_+)}\eta(d\theta)\rightarrow 0.
\]
Consequently, $u(r,\theta):=u_\theta(r)\in \FF$ and $u_n$ is $\EE_1$-convergent to $u$. In other words, $(\EE,\FF)$ is a closed form. 
Note that $C^{\infty}_{c}(\mathbb{G}^2)\subset\FF$, and it suffices to prove that $C^{\infty}_{c}(\mathbb{G}^2)$ is $\EE_1$-dense in $\FF$ for the regularity. In fact, suppose $f\in\FF$ such that $\EE_1(f,g)=0$ for every $g\in C^{\infty}_{c}(\mathbb{G}^2)$. Take $g(r,\theta)=\phi(r)\psi({\theta})$ with $\phi\in C^{\infty}_c\left(\mathbb{G}_+\right)$ and $\psi\in C^{\infty}(S^1)$, and we have
\[
\int_{S^1}\mathbf{D}_1(f_{\theta},\phi)\psi(\theta)\eta(d\theta)=0.
\]
It follows that there exist a family of countable functions $\{\phi_n\}\subset C^{\infty}_c\left(\mathbb{G}_+\right)$ dense in $H^1(\mathbb{G}_+)$ and a set $A\subset S^1$ with $\eta(A)=0$ such that $\mathbf{D}_1(f_\theta,\phi_n)=0$ for any $\theta\in S^1\setminus A$. This implies $f_{\theta}=0,\theta\in S^1\setminus A$ and thus $f=0$, $m$-a.e. on $\mathbb{G}^2$.

Let us turn to prove \eqref{EQ3FEFF}. Denote the right side of \eqref{EQ3FEFF} by $\mathcal{G}$. The family of all bounded functions in $\mathcal{G}$ is denoted by $\mathcal{G}_b$.  Let $f\in \FF_\e$ and $\{f_n\}\subset \FF$ be its approximation sequence. Mimicking the proof of closeness, we can conclude that $\{f'_{n,\theta}:n\geq 1\}$ is $L^2(\mathbb{G}_+)$-Cauchy and $f_{n,\theta}(r_\theta)\rightarrow f_\theta(r_\theta)$ with some $r_\theta\in \mathbb{G}_+$ for any $\theta\in S^1\setminus A$ with $\eta(A)=0$. Thus $f'_{n,\theta}\rightarrow g_\theta$ in $L^2(\mathbb{G}_+)$ for some $g_\theta \in L^2(\mathbb{G}_+)$. Set
\[
	\hat{f}(r,\theta):=f_\theta(r_\theta)+\int_{r_\theta}^r g_\theta(u)du,\quad r\in \mathbb{G}_+, \theta\in S^1\setminus A. 
\]
We can easily deduce that $f_n(r,\theta)\rightarrow \hat{f}(r,\theta)$ for $r\in \mathbb{G}_+$ and $\theta\in S^1\setminus A$. Thus $f=\hat{f}$, $m$-a.e.  
Clearly, $\hat{f}\in \mathcal{G}$ and we have $\FF_\e\subset \mathcal{G}$. To the contrary,  we need only prove $\mathcal{G}_b\subset \FF_\e$ by \cite[Lemma~1.1.12]{CF12}. Let $f\in \mathcal{G}_b$ with $\|f\|_\infty<M$ for some $M>0$. Further assume $f_\theta \in \text{BL}([0+,\infty))$ for $\theta\in S^1\setminus A$ with $\eta(A)=0$. Take for each integer $n$ a smooth function $\varphi_n\in C_c^\infty([0+,\infty))$ such that 
\[
	0\leq \varphi_n\leq 1, \quad \varphi_n|_{[0+, n]}=1,\quad \varphi_n|_{[2n+1,\infty)}=0,\quad |\varphi'_n|\leq 1/n.
\]
Set 
\[
f_n(r,\theta):= f(r,\theta)\cdot \varphi_n(r),\quad r\in \mathbb{G}_+, \theta\in S^1\setminus A. 
\]
Clearly, $f_n\in \FF$ and we can also deduce that $\{f_n\}$ is an approximation sequence of $f$. Therefore, $f\in \FF_\e$. 

The recurrence of $(\EE,\FF)$ is implied by the fact that $1\in \FF_\e, \EE(1,1)=0$. 
That completes the proof.
\end{proof}
\begin{remark}
The Dirichlet form $(\EE,\FF)$ in this lemma is not irreducible. Indeed, any set $[0+,\infty)\times A$ with $A\subset S^1$ being Borel measurable is an invariant set of $(\EE,\FF)$. 
\end{remark}

The SNOWB with a parameter $\kappa>0$ is, by definition, the snapping out Markov process with respect to $(\EE,\FF)$ and a finite measure $\mu(drd\theta):=\kappa \delta_0(dr)\eta(d\theta)$, where $\delta_0$ is the Dirac measure centered on $0+\in \mathbb{G}_+$ hereafter. The smoothness of $\mu$ is implied by the following lemma.

\begin{lemma}\label{LM313}
Let $(\EE,\FF)$ be given by \eqref{EQ4FFL} and $\kappa>0$. Then $\mu(drd\theta)=\kappa \delta_0(dr)\eta(d\theta)$ is smooth with respect to $(\EE,\FF)$.
\end{lemma}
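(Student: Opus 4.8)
The plan is to show that $\mu$ belongs to the class $S_0$ of positive Radon measures of \emph{finite energy integral} with respect to $(\EE,\FF)$; since every measure of finite energy integral is smooth (see \cite[\S2.2]{FOT11} or \cite{CF12}), this settles the claim at once. Concretely, I would exhibit a constant $C>0$ such that $\int_{\mathbb{G}^2}|v|\,d\mu\le C\,\EE_1(v,v)^{1/2}$ for every $v$ in the core $C^\infty_c(\mathbb{G}^2)$, which is $\EE_1$-dense in $\FF$ by Lemma~\ref{LM31}. The whole matter then reduces to a one-dimensional trace estimate on each ray, combined with an integration in the angular variable.

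First I would record the elementary trace inequality on the half-line. For $h\in C^\infty_c(\mathbb{G}_+)$ one has $h(0)^2=-\int_0^\infty (h^2)'(r)\,dr=-2\int_0^\infty h(r)h'(r)\,dr$, so that by the arithmetic–geometric inequality $h(0)^2\le\int_0^\infty\big(h(r)^2+h'(r)^2\big)\,dr=\|h\|_{H^1(\mathbb{G}_+)}^2$. Applying this with $h=v_\theta:=v(\cdot,\theta)$, which lies in $C^\infty_c(\mathbb{G}_+)$ for each fixed $\theta$, yields the pointwise bound $|v_\theta(0)|\le\|v_\theta\|_{H^1(\mathbb{G}_+)}$.

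Next, since $\mu=\kappa\,\delta_0(dr)\eta(d\theta)$ is carried by the origin circle $\{0+\}\times S^1$ and $\eta$ is a probability measure, I would estimate, using the previous bound and the Cauchy–Schwarz inequality in $\theta$,
\begin{equation*}
\int_{\mathbb{G}^2}|v|\,d\mu=\kappa\int_{S^1}|v_\theta(0)|\,\eta(d\theta)\le\kappa\int_{S^1}\|v_\theta\|_{H^1(\mathbb{G}_+)}\,\eta(d\theta)\le\kappa\Big(\int_{S^1}\|v_\theta\|_{H^1(\mathbb{G}_+)}^2\,\eta(d\theta)\Big)^{1/2}.
\end{equation*}
Because $m=dr\,\eta(d\theta)$ and $\EE(v,v)=\tfrac12\int_{S^1}\mathbf{D}(v_\theta,v_\theta)\,\eta(d\theta)$, the last integral unwinds to $\|v\|_{L^2(\mathbb{G}^2,m)}^2+\int_{S^1}\mathbf{D}(v_\theta,v_\theta)\,\eta(d\theta)=\|v\|_{L^2}^2+2\EE(v,v)\le 2\,\EE_1(v,v)$. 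Hence $\int_{\mathbb{G}^2}|v|\,d\mu\le\sqrt2\,\kappa\,\EE_1(v,v)^{1/2}$, so $\mu\in S_0$ and is therefore smooth with respect to $(\EE,\FF)$.

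I do not anticipate a genuine obstacle: the argument is a routine combination of the half-line trace inequality and Fubini/Cauchy–Schwarz in $\theta$. The only points demanding minor care are (i) that $C^\infty_c(\mathbb{G}^2)$ is indeed a core, so the finite-energy-integral inequality on this class suffices, and (ii) that the pointwise boundary value $v_\theta(0)$ entering the estimate coincides with the relevant quasi-continuous representative on the origin circle — both immediate for smooth compactly supported $v$. An equivalent but more hands-on route would verify directly that $\mu$ charges no set of zero capacity, by showing that any angular set $A\subset S^1$ with $\eta(A)>0$ forces $\{0+\}\times A$ to have positive capacity; the $S_0$ computation above is cleaner and delivers the stronger conclusion $\mu\in S_0$.
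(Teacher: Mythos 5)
Your proposal is correct and takes essentially the same route as the paper: both prove that $\mu$ has finite energy integral (hence is smooth) by combining the one-dimensional trace inequality $|v_\theta(0)|\le C\|v_\theta\|_{H^1(\mathbb{G}_+)}$ on each ray with Cauchy--Schwarz in the angular variable, yielding $\int|v|\,d\mu\le C'\kappa\,\EE_1(v,v)^{1/2}$. The only cosmetic difference is that the paper runs the estimate directly for $v\in\FF\cap C_c(\mathbb{G}^2)$, using that $v_\theta\in H^1(\mathbb{G}_+)$ for $\eta$-a.e.\ $\theta$, whereas you restrict to the core $C^\infty_c(\mathbb{G}^2)$ and then invoke density, which is harmless here since $\mu$ is finite.
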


\begin{proof}
We assert $\mu$ is of finite energy integral, which implies that $\mu$ is smooth. Indeed, for any $v\in\FF\cap C_c(\mathbb{G}^2)$, $v_{\theta}\in H^1(\mathbb{G}_+)$ for $\eta$-a.e. $\theta$. Thus
\[|v_{\theta}(0)| \leq C\|v_{\theta}\|_{H^1(\mathbb{G}_+)}
\]
for some constant $C>0$ independent of $v$ and 
\[
\int |v|d\mu= \kappa \int_{S^1}|v(0, \theta)|\eta(d\theta)\leq C\kappa\int_{S^1}\|v_{\theta}\|_{H^1(\mathbb{G}_+)}\eta(d\theta)\leq 2C\kappa \sqrt{\EE_1(v,v)}.
\]
That completes the proof. 
\end{proof}

Then we can conclude the following assertions about the SNOWB. 

\begin{theorem}\label{PRO314}
Let $(\EE,\FF)$ be given by \eqref{EQ4FFL}. Then the SNOWB is associated with a regular Dirichlet form $(\EE^\s, \FF^\s)$ on $L^2(\mathbb{G}^2, m)$ as follows:
\begin{equation}\label{EQ3FSFF}
\begin{aligned}
 \FF^\s &= \FF, \\
\EE^\s(f,g)&=
  \frac{1}{2}\int_{S^1}\mathbf{D}(f_{\theta},g_{\theta})\eta(d\theta)\\
 &+\dfrac{\kappa}{2}\int_{S^1\times S^1}\left(f(0, \theta)-f(0,\theta')\right)\left(g(0, \theta)-g(0, \theta')\right)\eta(d\theta)\eta(d\theta'),\quad f,g\in \FF^\s.
 \end{aligned}
\end{equation}
The extended Dirichlet space $\FF^\s_\e$ of $(\EE^\s,\FF^\s)$ is
\[
\begin{aligned}
  \FF^\s_\e=\bigg\{ f: 
 f_{\theta}\in \text{BL}([0+,\infty))~&\mbox{for}~\eta\mbox{-a.e.}~\theta, \int_{S^1}\mathbf{D}(f_{\theta},f_{\theta})\eta(d\theta)<\infty,\\ & \int_{S^1\times S^1}\left(f(0, \theta)-f(0, \theta')\right)^2\eta(d\theta)\eta(d\theta')<\infty\bigg\}. 
\end{aligned}\]
Particularly, $(\EE^\s,\FF^\s)$ is irreducible and recurrent. 
\end{theorem}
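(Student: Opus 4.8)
The plan is to derive everything as a specialization of the general snapping out construction recalled in Section~\ref{SNMP}, and then to treat recurrence and irreducibility by a direct analysis of the null space of $\EE^\s$.

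First I would check the hypotheses needed to invoke the general formula \eqref{EQ3FSU}. The reflecting form $(\EE,\FF)$ of \eqref{EQ4FFL} is a pure diffusion form carrying no killing measure, so the associated reflecting WBM has no killing inside; moreover $\mu(drd\theta)=\kappa\delta_0(dr)\eta(d\theta)$ is finite with $|\mu|=\kappa\,\eta(S^1)=\kappa$, and it is smooth by Lemma~\ref{LM313}. Hence the SNOWB, being by definition the snapping out Markov process for $(\EE,\FF)$ and $\mu$, is $m$-symmetric and its Dirichlet form is given by \eqref{EQ3FSU}. It then remains to evaluate \eqref{EQ3FSU} for this particular $\mu$.

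Second, I would carry out the specialization. Since $\mu$ is carried by $\{r=0\}$, every evaluation $u(x)$ in the double integral collapses to the trace $u(0,\theta)$, so that $\int_{\mathbb{G}^2\times\mathbb{G}^2}(u(x)-u(y))^2\mu(dx)\mu(dy)=\kappa^2\int_{S^1\times S^1}(u(0,\theta)-u(0,\theta'))^2\eta(d\theta)\eta(d\theta')$, and the normalizing factor $1/(2|\mu|)=1/(2\kappa)$ converts the second term of \eqref{EQ3FSU} into the $\kappa/2$ jump form of \eqref{EQ3FSFF}. To identify the domain, I would use the trace estimate $|u_\theta(0)|\le C\|u_\theta\|_{H^1(\mathbb{G}_+)}$ established in the proof of Lemma~\ref{LM313} together with $\int_{S^1}\|u_\theta\|_{H^1(\mathbb{G}_+)}^2\eta(d\theta)<\infty$ for $u\in\FF$; bounding $(u(0,\theta)-u(0,\theta'))^2\le 2u(0,\theta)^2+2u(0,\theta')^2$ then shows the jump integral is automatically finite on all of $\FF$, whence $\FF^\s=\FF$. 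For the extended space I would invoke the general description of $\FF^\s_\e$ from \cite{LS18} as $\{f\in\FF_\e:\int(f(x)-f(y))^2\mu(dx)\mu(dy)<\infty\}$ (or, alternatively, reprove it by an approximation argument mimicking that of Lemma~\ref{LM31}) and combine it with the characterization \eqref{EQ3FEFF} of $\FF_\e$. Here the finiteness of the jump integral is a genuine restriction, because functions in $\FF_\e$ enjoy only the weaker $\text{BL}([0+,\infty))$ regularity on each ray, so their traces at the origin need not be square integrable over $S^1$; this is why the extra condition appears explicitly in the statement.

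Third, recurrence and irreducibility. Recurrence is immediate: the constant function $1$ lies in $\FF^\s_\e$ and its jump form vanishes, so $\EE^\s(1,1)=0$. The main content is irreducibility, which I would obtain by computing the null space $\{u\in\FF^\s_\e:\EE^\s(u,u)=0\}$. Vanishing of the Dirichlet part $\tfrac12\int_{S^1}\mathbf{D}(u_\theta,u_\theta)\eta(d\theta)$ forces $u_\theta$ to be constant on each ray for $\eta$-a.e.\ $\theta$, so $u(r,\theta)=u(0,\theta)$; vanishing of the jump part forces $u(0,\theta)=u(0,\theta')$ for $\eta\otimes\eta$-a.e.\ $(\theta,\theta')$, and since $\eta$ is fully supported this means $u(0,\cdot)$ equals a single constant $\eta$-a.e. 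Thus $u$ is $m$-a.e.\ constant, the null space is trivial, and for a recurrent form this yields irreducibility. I expect this last step to be the principal obstacle: it is precisely the jump form that glues together the separate rays which made $(\EE,\FF)$ reducible (cf.\ the Remark after Lemma~\ref{LM31}), and the argument relies essentially on the full support of $\eta$.
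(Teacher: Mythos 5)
Your proposal is correct and follows essentially the same route as the paper: verify smoothness of $\mu$ (Lemma~\ref{LM313}) and absence of killing, specialize the general formula \eqref{EQ3FSU}, identify $\FF^\s=\FF$ through the trace bound $\left(f(0,\theta)-f(0,\theta')\right)^2\le 2f(0,\theta)^2+2f(0,\theta')^2\le C\left(\mathbf{D}_1(f_\theta,f_\theta)+\mathbf{D}_1(f_{\theta'},f_{\theta'})\right)$, obtain $\FF^\s_\e$ from \cite[Proposition~3.8]{LS18} combined with \eqref{EQ3FEFF}, and deduce irreducibility by showing that $\EE^\s(u,u)=0$ forces $u$ to be $m$-a.e.\ constant, exactly as in the paper's appeal to \cite[Theorem~5.2.16]{CF12}. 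The only cosmetic deviations are that you check recurrence directly via $1\in\FF^\s_\e$ with $\EE^\s(1,1)=0$ where the paper cites \cite[Proposition~3.8~(1)]{LS18}, and that your invocation of the full support of $\eta$ is not needed: $u(0,\theta)=u(0,\theta')$ for $\eta\otimes\eta$-a.e.\ $(\theta,\theta')$ already yields $\eta$-a.e.\ constancy by Fubini.
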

\begin{proof}
Note that for any $f\in \FF$, 
\begin{equation*}
\begin{aligned}
\left(f(0, \theta)-f(0, \theta')\right)^2& \leq 2(f(0,\theta)^2+f(0,\theta')^2) \\
&\leq C \left(\mathbf{D}_1(f_\theta, f_\theta)+\mathbf{D}_1(f_{\theta'}, f_{\theta'}) \right)
\end{aligned}
\end{equation*}
with some constant $C$ independent of $f$. Thus 
\[
\int_{S^1\times S^1}\left(f(0, \theta)-f(0, \theta')\right)^2\eta(d\theta)\eta(d\theta')\leq 4C\EE_1(f,f). 
\]
This indicates \eqref{EQ3FSFF} by \eqref{EQ3FSU}. The expression of $\FF^\s_\e$ is implied by  \cite[Proposition~3.8]{LS18} and \eqref{EQ3FEFF}. Since $(\EE,\FF)$ is recurrent by Lemma~\ref{LM31}, it follows from \cite[Proposition~3.8~(1)]{LS18} that $(\EE^\s,\FF^\s)$ is also recurrent. 

To show the irreducibility of $(\EE^\s, \FF^\s)$, let $f\in \FF^\s$ with $\EE^\s(f,f)=0$. This means
\[
\int_{S^1}\mathbf{D}(f_\theta,f_\theta)\eta(d\theta)=\int_{S^1\times S^1}\left(f(0, \theta)-f(0,\theta')\right)^2\eta(d\theta)\eta(d\theta')=0. 
\]
As a consequence, there exists a set $A\subset S^1$ with $\eta(A)=0$ such that for any $\theta, \theta'\in S^1\setminus A$, 
\[
f_\theta\in H^1(\mathbb{G}_+),\quad \mathbf{D}(f_\theta,f_\theta)=0,\quad f(0, \theta)-f(0,\theta')=0.
\]
Clearly, $\mathbf{D}(f_\theta,f_\theta)=0$ indicates $f_\theta$ is a constant function. Then it follows from $f(0, \theta)-f(0,\theta')=0$ that $f$ is constant $m$-a.e. Therefore, we obtain the irreducibility of $(\EE^\s, \FF^\s)$ from \cite[Theorem~5.2.16]{CF12}. That completes the proof. 
\end{proof}

\begin{remark}
It does not always hold that $\FF_\e=\FF^\s_\e$. For example, assume $\eta$ is the uniform distribution on $S^1$ and take 
\[
	f(r,\theta)=\theta^{-1},\quad r\in [0+,\infty), \theta\in (0,2\pi). 
\]
Clearly, $f\in \FF_\e$, while $\int_{S^1\times S^1}\left(f(0, \theta)-f(0, \theta')\right)^2d\theta d\theta'$ diverges. 
\end{remark}

\subsection{Links between WBM and SNOWB}

Now we pursue to study the links between WBM and SNOWB. Two transforms of Markov processes are involved in the principal result. One is the time-change, whose counterpart in the theory of Dirichlet form is the so-called trace Dirichlet form. This transform is very well known, and we hesitate to repeat its details for the sake of brevity. Instead, we refer to \cite[Chapter 5]{CF12} as well as \cite[\S2]{LS18}. The other is the darning transform, which was raised in \cite{CF08}. Roughly speaking, it collapses a compact subset of the state space to an abstract point and produces a new Markov process by `erasing' the information contained in this compact set. Particularly, given a regular Dirichlet form $(\EE,\FF)$ on $L^2(E,m)$ and a compact set $K\subset E$ of postive capacity, the Markov process obtained by the darning transform, which shorts $K$ into $a^*$, is given by the Dirichlet form on $L^2(E^*,m^*)$
\begin{equation}\label{DR}
 \begin{aligned}
 & \FF^*=\left\{f^*: f\in \FF, f \mbox{ is constant $\EE$-q.e. on $K$}\right\},\\
 & \EE^*(f^*,g^*)=\EE(f,g),\quad f^*,g^*\in \FF^*,
 \end{aligned}
\end{equation}
where $E^*:=(E\setminus K)\cup \{a^*\}$, $m^*|_{E\setminus K}:=m$, $m^*(\{a^*\})=0$ and $f^*|_{E\setminus K}:=f$, $f^*(a^*):=f(x)$ with some $x\in K$. 
We refer a relevant study of darning transform to \cite{CP17}. A short review is also presented in \cite[\S2]{LS18}. 

The notation
\[
	\mathbf{T}_{\beta}: \mathbb{G}^2\rightarrow \{x\in \mathbb{R}^2: |x|\geq \beta\},\quad (r, \theta)\mapsto (r+\beta, \theta)
\]
with $\beta>0$ stands for the homeomorphism between $\mathbb{G}^2$ and $\{x\in \mathbb{R}^2: |x|\geq \beta\}$. The following theorem is inspired by an analogical result \cite[Theorem~3.12]{LS18}, which links the snapping out Brownian motion with one-dimensional Brownian motion. It tells us after a spatial transform the SNOWB is the trace of WBM on a certain closed set, and on the contrary, the WBM is the darning of SNOWB by shorting $\{0+\}\times S^1$ into $\bf{0}$. 

\begin{theorem}\label{THM317}
\begin{itemize}
\item[(1)] By shorting $\{0+\}\times S^1$ into $\bf{0}$, the Markov process with darning induced by the SNOWB is the Walsh's Brownian motion. 
\item[(2)] Denote the SNOWB by $W^\s=(W^\s_t)_{t\geq 0}$. Let $F_\kappa:=\{x\in \mathbb{R}^2: |x|\geq (2\kappa)^{-1}\}$ and $m_\kappa(drd\theta):= dr\eta(d\theta)$ on $F_\kappa$. Then $\mathbf{T}_{(2\kappa)^{-1}}(W^\s)$ is a Markov process on $F_\kappa$ associated with the trace Dirichlet form of $(\EE^W,\FF^W)$ on $F_\kappa$ with the speed measure $m_\kappa$. 
\end{itemize}
\end{theorem}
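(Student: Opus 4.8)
For (1) the plan is to apply the darning formula \eqref{DR} directly to the SNOWB form \eqref{EQ3FSFF}, taking $K=\{0+\}\times S^1$ and $a^*=\mathbf 0$. First I would identify the resulting state space: since $\mathbb{G}^2\setminus K=(0,\infty)\times S^1$ and we adjoin the single point $\mathbf 0$, the darned space $E^*=((0,\infty)\times S^1)\cup\{\mathbf 0\}$ is precisely $\mathbb R^2$ with the origin recovered as one point, carrying $m^*=m=dr\,\eta(d\theta)$ (the origin being of zero measure). Next I would read off the domain: a function $f\in\FF^\s=\FF$ is constant $\EE^\s$-q.e.\ on $K$ exactly when $f(0,\theta)=c$ for $\eta$-a.e.\ $\theta$ with $c$ independent of $\theta$, which is the matching condition $\lim_{r\to0}f_\theta(r)=c$ defining $\FF^W$ in \eqref{EQ3FWF}. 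The crucial observation is that on such functions the jump part of \eqref{EQ3FSFF} vanishes identically, so $\EE^*(f^*,g^*)=\frac12\int_{S^1}\mathbf D(f_\theta,g_\theta)\eta(d\theta)=\EE^W(f^*,g^*)$. Hence $(\EE^*,\FF^*)=(\EE^W,\FF^W)$ and the darned process is the WBM. The only subtlety is that $K$ (compact, since $S^1$ is) must have positive capacity with respect to $(\EE^\s,\FF^\s)$, which holds because each ray endpoint is a regular boundary point of a reflecting one-dimensional Brownian motion.

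For (2) the substantive work is a harmonic-extension computation. Write $b:=(2\kappa)^{-1}$ and let $B:=B(\mathbf 0,b)$, so that in the WBM picture the open ball is the union of the ray segments $[0,b)$ glued at $\mathbf 0$. By the standard description of trace Dirichlet forms (cf.\ \cite[Chapter~5]{CF12}), the trace $(\check\EE,\check\FF)$ of $(\EE^W,\FF^W)$ on $F_\kappa$ with Revuz measure $m_\kappa$ has energy $\check\EE(u,u)=\EE^W(Hu,Hu)$, where $Hu$ agrees with $u$ on $F_\kappa$ and is $\EE^W$-harmonic on $B$. I would first determine $Hu$ on $B$: integrating by parts against test functions supported in the open ball shows that harmonicity forces $(Hu)_\theta$ to be linear on each ray together with the Walsh flux balance $\int_{S^1}(Hu)'_\theta(0+)\,\eta(d\theta)=0$ at the origin. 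Solving this yields $(Hu)_\theta(r)=\bar u+\frac{u_\theta(b)-\bar u}{b}\,r$ on $[0,b)$, where $\bar u:=\int_{S^1}u_\theta(b)\,\eta(d\theta)$.

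With $Hu$ in hand the energy splits as $\EE^W(Hu,Hu)=\frac12\int_{S^1}\int_b^\infty (u'_\theta)^2\,dr\,\eta(d\theta)+\EE^W_B(Hu,Hu)$, and the inner-ball contribution evaluates to
\[
\EE^W_B(Hu,Hu)=\frac1{2b}\int_{S^1}\big(u_\theta(b)-\bar u\big)^2\eta(d\theta)
=\frac{\kappa}{2}\int_{S^1\times S^1}\big(u_\theta(b)-u_{\theta'}(b)\big)^2\eta(d\theta)\eta(d\theta'),
\]
using the variance identity $\int_{S^1}(u_\theta(b)-\bar u)^2\eta=\frac12\int_{S^1\times S^1}(u_\theta(b)-u_{\theta'}(b))^2\eta\eta$ together with $1/(4b)=\kappa/2$. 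Transporting everything back through $\mathbf T_b$ via $\tilde u(r,\theta):=u(r+b,\theta)$ turns the boundary value $u_\theta(b)$ into $\tilde u(0,\theta)$ and the exterior energy into $\frac12\int_{S^1}\mathbf D(\tilde u_\theta,\tilde u_\theta)\eta$, so that $\check\EE(u,u)$ becomes exactly $\EE^\s(\tilde u,\tilde u)$ from \eqref{EQ3FSFF}; the measure $m_\kappa$ likewise pulls back to $m$.

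The last step is to match the domains, i.e.\ to check that $u\mapsto\tilde u$ carries $\check\FF$ onto $\FF^\s$. Here I would use the extended Dirichlet space descriptions: a function $u\in\FF^W_\e$ restricted to $F_\kappa$ and pulled back lands in $\FF_\e$ by \eqref{EQ3FWE} and \eqref{EQ3FEFF}, while conversely any $\tilde u\in\FF^\s$ extends via its harmonic extension $H$ into $B$ to a member of $\FF^W_\e$; the finiteness of the jump term is automatic on $\FF=\FF^\s$ as already noted in the proof of Theorem~\ref{PRO314}, so the $L^2(m_\kappa)$ and $L^2(m)$ conditions correspond. The main obstacle is the harmonic-extension step, specifically getting the flux condition at $\mathbf 0$ correct and keeping track of the factor $1/(4b)=\kappa/2$; once the inner-ball energy is identified with the SNOWB jump form, the identification of the processes follows from the uniqueness of the Dirichlet form associated with $\mathbf T_b(W^\s)$.
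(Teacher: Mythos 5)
Your proposal is correct, and while your part (1) is essentially the paper's argument (the paper also just applies \eqref{DR} with $K=\{0+\}\times S^1$, $a^*=\mathbf{0}$ to \eqref{EQ3FSFF}; your remarks on the darned state space and the positive capacity of $K$ are details it leaves implicit), your part (2) takes a genuinely different route. The paper computes the trace form through the decomposition of \cite[Corollary~5.6.1]{CF12}, namely $\check{\EE}(f,f)=\frac{1}{2}\mu_{\langle\mathbf{H}_Ff\rangle}(F)+\frac{1}{2}\int_{F\times F\setminus \mathtt{d}}(f(x)-f(y))^2U(dx,dy)$, which forces it to identify the Feller measure $U$; that in turn requires the $\lambda$-order hitting distributions \eqref{HDA} (the $\sinh/\cosh$ formulas, obtained probabilistically from the skew-product structure of WBM and one-dimensional exit identities) and the limit \eqref{EQ3UPP}. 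You instead start from the defining formula $\check{\EE}(u,u)=\EE^W(\mathbf{H}_Fu,\mathbf{H}_Fu)$ and determine the harmonic extension analytically: linearity on each ray together with the flux balance $\int_{S^1}(Hu)'_\theta(0+)\,\eta(d\theta)=0$ yields exactly the paper's \eqref{HD}, i.e. $\mathbf{H}_Fu(r,\theta)=\bar u+\frac{r}{b}\bigl(u_\theta(b)-\bar u\bigr)$, after which the inner-ball energy $\frac{1}{2b}\int_{S^1}(u_\theta(b)-\bar u)^2\eta(d\theta)$, the variance identity and $1/(4b)=\kappa/2$ reproduce the jump term of \eqref{EQ3FSFF}. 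This bypasses Feller measures and all $\lambda$-resolvent computations, so it is shorter and more elementary; what the paper's route buys is the explicit identification of the Feller measure \eqref{EQ3UXY} (of some independent interest) and a derivation that rests on the probabilistic description of WBM rather than on the variational characterization of $\mathbf{H}_F$ as the $\EE^W$-orthogonal projection onto functions harmonic in the hole. Your domain identification agrees with the paper's: both reduce to restrictions of $\FF^W_\e$ to $F_\kappa$ intersected with $L^2(F_\kappa,m_\kappa)$, the finiteness of the jump term on $\FF$ being automatic as already shown in the proof of Theorem~\ref{PRO314}.
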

\begin{proof}
The first assertion is clear by applying \eqref{DR} with $K:=\{0+\}\times S^1$ and $a^*:=\mathbf{0}$ to \eqref{EQ3FSFF}. For the second assertion, it suffices to characterize the trace Dirichlet form $(\check{\EE},\check{\FF})$ of $(\EE^W,\FF^W)$ on $F_\kappa$. Write $a:=(2\kappa)^{-1}$, $F:=F_\kappa$, $\partial F:=\{x\in \mathbb{R}^2: |x|=a\}$ and $\mathfrak{m}:=m_\kappa$ (on $F_\kappa$) for convenience. 
Recall that for appropriate function $f$ on $F$, $\mathbf{H}_F$ denotes the hitting distribution of $W$ for $F$, i.e. 
\[\mathbf{H}_Ff(x)=\mathbf{E}_x[f(W_{\sigma_F}), \sigma_F<\infty],   
\]
and $\sigma_F$ denotes the hitting time of $F$ with respect to $W$. For $\lambda>0$, we also write
\[
\mathbf{H}^{\lambda}_Ff(x)=\mathbf{E}_x[e^{-\lambda\sigma_F}f(W_{\sigma_F}), \sigma_F<\infty]. 
\]

A first step towards the trace Dirichlet form is to prove the following assertion: For any non-negative bounded function $\phi$ on $F$ and $x=(r,\theta)\in G:=F^c$ (i.e. $r<a, \theta\in S^1$),
\begin{align}
& \mathbf{H}_F\phi(r,\theta)=\dfrac{r}{a}\phi(a,\theta)+\left(1-\dfrac{r}{a}\right)\bar{\phi}(a),\label{HD}\\
&\mathbf{H}^{\lambda}_F\phi(r,\theta)=\dfrac{\sinh(\sqrt{2\lambda}r)}{\sinh(\sqrt{2\lambda}a)}\phi(a,\theta)+\dfrac{\sinh(\sqrt{2\lambda}(a-r))}{\sinh(\sqrt{2\lambda}a)\cosh(\sqrt{2\lambda}a)}\bar{\phi}(a),\label{HDA}
\end{align}
where $\bar{\phi}(a):=\int_{S^1}\phi(a,\theta)\eta(d\theta)$. 

We first consider the case $0<r<a$. The continuity of $W$ implies $W_{\sigma_F}\in \partial F$, $\mathbf{P}_x$-a.s. for $x\in G$. Denote the hitting time of $\{\bf{0}\}$ with respect to $W$ by $\sigma_{\bf{0}}$. We have
\[
\mathbf{H}_F\phi(r,\theta) =\mathbf{E}_{(r,\theta)}\left[\phi(W_{\sigma_F})1_{\{\sigma_F<\sigma_{\bf{0}}\}}\right]+\mathbf{E}_{(r,\theta)}\left[\phi(W_{\sigma_F})1_{\{\sigma_F>\sigma_{\bf{0}}\}}\right].
\]
Note that $\sigma_F<\sigma_{\bf{0}}$ amounts to $\sigma_{(a,\theta)}<\sigma_{\bf{0}}$, where $\sigma_{(a,\theta)}$ is the hitting time of $\{(a,\theta)\}$ with respect to $W$. In the meantime, $W_{\sigma_F}=(a,\theta)$ and thus
\begin{equation}\label{EQ3ERT}
\mathbf{E}_{(r,\theta)}\left[\phi(W_{\sigma_F})1_{\{\sigma_F<\sigma_{\bf{0}}\}}\right]=\phi(a,\theta)\mathbf{P}_{(r,\theta)}[\sigma_{(a,\theta)}<\sigma_{\bf{0}}].
\end{equation}
Let $B=(B_t)_{t\geq 0}$ be a one-dimensional Brownian motion and $\tau_x$ be the hitting time of $\{x\}$ with respect to $B$ for $x\in \mathbb{R}$. Note that $|W|$ is a reflecting Brownian motion on $[0,\infty)$ (Cf. \cite{W78}) and thus has the same distribution as $|B|$. It follows that
\begin{equation}\label{EQ3PRT}
\mathbf{P}_{(r,\theta)}[\sigma_{(a,\theta)}<\sigma_{\bf{0}}]=
\mathbf{P}^B_r[\tau_a<\tau_0]=\frac{r}{a},
\end{equation}
where $\mathbf{P}^B_r$ is the probability measure of $B$ starting from $r$. 
The last equality follows from Problem 6 of \S 1.7 in \cite{IM74}. When $\sigma_F>\sigma_{\bf{0}}$, we can deduce from the strong Markov property of $W$ that
\[
\mathbf{E}_{(r,\theta)}\left[\phi(W_{\sigma_F})1_{\{\sigma_F>\sigma_{\bf{0}}\}}\right]=\mathbf{E}_{(r,\theta)}\left[1_{\{\sigma_F>\sigma_{\bf{0}}\}}\mathbf{E}_{\bf{0}}[\phi(W_{\sigma_F})]\right].
\]
Thanks to \cite{BPY89, W78}, we know that $W_t=(|W_t|, \Psi_t)$ in polar coordinate system is such that $|W|$ is independent of $\Psi$, and $\Psi_t$ is distributed as $\eta$ for any $t$ under $\mathbf{P}_{\bf{0}}$. Particularly, $\sigma_F=\inf\{t>0: |W_t|\geq a\}$ is independent of $\Psi$ under $\mathbf{P}_{\bf{0}}$. Hence 
\begin{equation}\label{EQ3EWF}
	\mathbf{E}_{\bf{0}}[\phi(W_{\sigma_F})]=\mathbf{E}_{\bf{0}}[\phi(a, \Psi_{\sigma_F})]=\bar{\phi}(a)
\end{equation}
and we have
\begin{equation}\label{EQ3ERTP}
\mathbf{E}_{(r,\theta)}\left[\phi(W_{\sigma_F})1_{\{\sigma_F>\sigma_{\bf{0}}\}}\right]=\left(1-\frac{r}{a}\right)\bar{\phi}(a).
\end{equation}
Then \eqref{HD} follows from \eqref{EQ3ERT}, \eqref{EQ3PRT} and \eqref{EQ3ERTP}. To prove \eqref{HDA}, we have
\begin{equation}\label{EQ3HFL}
\mathbf{H}_F^{\lambda}\phi(r,\theta)
 =\mathbf{E}_{(r,\theta)}\left[e^{-\lambda\sigma_F}\phi(W_{\sigma_F})1_{\{\sigma_F<\sigma_{\bf{0}}\}}\right]+\mathbf{E}_{(r,\theta)}\left[e^{-\lambda\sigma_F}\phi(W_{\sigma_F})1_{\{\sigma_F>\sigma_{\bf{0}}\}}\right].
\end{equation}
The first term of \eqref{EQ3HFL} equals
\begin{equation}\label{EQ3ERTE}
\mathbf{E}_{(r,\theta)}\left[e^{-\lambda\sigma_F}\phi(W_{\sigma_F})1_{\{\sigma_F<\sigma_{\bf{0}}\}}\right]=
\phi(r,\theta)\mathbf{E}^B_r\left[e^{-\lambda\tau_a};\tau_a<\tau_0\right]
\end{equation}
and by the strong Markov property of $W$, the second term of \eqref{EQ3HFL} equals
\[
\mathbf{E}_{(r,\theta)}\left[e^{-\lambda\sigma_F}\phi(W_{\sigma_F})1_{\{\sigma_F>\sigma_{\bf{0}}\}}\right]
=\mathbf{E}_{(r,\theta)}\left[e^{-\lambda\sigma_{\bf{0}}}1_{\{\sigma_F>\sigma_{\bf{0}}\}}\mathbf{E}_{\bf{0}}\left[e^{-\lambda\sigma_F}\phi(W_{\sigma_F})\right]\right]. 
\]
Similar to \eqref{EQ3EWF}, we obtain
\begin{equation}\label{EQ3EEL}
\mathbf{E}_{\bf{0}}\left[e^{-\lambda\sigma_F}\phi(W_{\sigma_F})\right]=\mathbf{E}_{\bf{0}}\left[e^{-\lambda\sigma_F}\right]\bar{\phi}(a).
\end{equation}
Since $|W|$ has the same distribution as $|B|$, it follows that
\begin{equation}\label{EQ3ERTEL}
\mathbf{E}_{(r,\theta)}\left[e^{-\lambda\sigma_F}\phi(W_{\sigma_F})1_{\{\sigma_F>\sigma_{\bf{0}}\}}\right]
=\mathbf{E}^B_r\left[e^{-\lambda\tau_0};\tau_0<\tau_a\right]\mathbf{E}_0^B\left[e^{-\lambda(\tau_{-a}\wedge\tau_a)}\right]\bar{\phi}(a).
\end{equation}
Problem 6 of \S1.7 in \cite{IM74} implies
\begin{equation}\label{EQ3EBR}
\begin{aligned}
& \mathbf{E}^B_r\left[e^{-\lambda\tau_a};\tau_a<\tau_0\right]=\dfrac{\sinh(\sqrt{2\lambda}r)}{\sinh(\sqrt{2\lambda}a)},\\
& \mathbf{E}^B_r\left[e^{-\lambda\tau_0};\tau_0<\tau_a\right]=\dfrac{\sinh(\sqrt{2\lambda}(a-r))}{\sinh(\sqrt{2\lambda}a)},\\
& \mathbf{E}^B_0\left[e^{-\lambda(\tau_{-a}\wedge\tau_a)}\right]=\dfrac{1}{\cosh(\sqrt{2\lambda}a)}.
\end{aligned}
\end{equation}
Hence \eqref{HDA} follows from \eqref{EQ3ERTE} and \eqref{EQ3ERTEL}.

When $r=0$, \eqref{HD} is implied by \eqref{EQ3EWF}, and \eqref{HDA} follows from \eqref{EQ3EEL} and the last equality of \eqref{EQ3EBR}. 

Now we claim that the trace Dirichlet form $(\check{\EE},\check{\FF})$ on $L^2(F,\m)$ is given by 
\begin{equation*}
 \begin{aligned}
 & \check{\FF}=\left\{f\in L^2(F,\m): 
  f_{\theta}\in H^1([a,\infty)) ~\mbox{for}~\eta\mbox{-a.e.}~\theta,
\int_{S^1}\mathbf{D}^{(a)}(f_{\theta},f_{\theta})\eta(d\theta)<\infty\right\},
\\
 & 
 \begin{aligned}
 \check{\EE}(f,g)=
 &~\frac{1}{2}\int_{S^1}\mathbf{D}^{(a)}(f_{\theta},g_{\theta})\eta(d\theta)\\
 &~+\frac{1}{4a}\int_{S^1\times S^1}\left(f_{\theta}(a)-f_{\theta'}(a)\right)\left(g_{\theta}(a)-g_{\theta'}(a)\right)\eta(d\theta)\eta(d\theta'),\quad f,g\in \check{\FF},
 \end{aligned}
 \end{aligned} 
\end{equation*}
where $\mathbf{D}^{(a)}(f_\theta,g_\theta):=\int_a^{\infty}f'_\theta(r)g'_\theta(r)dr$. Indeed, the expression of $\check{\FF}$ follows from \cite[(2.2)]{LS18} and \eqref{EQ3FWE}.
Since $(\EE^W,\FF^W)$ is recurrent, it follows from \cite[Theorem 5.2.5 and Proposition 2.1.10]{CF12} that $(\check{\EE},\check{\FF})$ is conservative. Thus $(\check{\EE},\check{\FF})$ has no killing inside and from \cite[Corollary 5.6.1]{CF12} we can obtain that for any $f\in \check{\FF}_\e$, 
\begin{equation}\label{TR}
\check{\EE}(f,f)=\frac{1}{2}\mu_{\langle\mathbf{H}_Ff\rangle}(F)+\frac{1}{2}\int_{F\times F\setminus \mathtt{d}}\left(f(x)-f(y)\right)^2U(dx,dy),
\end{equation}
where $\mu_{\langle\mathbf{H}_Ff\rangle}$ is the energy measure of $(\EE^W,\FF^W)$ relative to $\mathbf{H}_Ff$, $\mathtt{d}$ is the diagonal of $F\times F$ and $U$ is the Feller measure of $W$ on $F\times F\setminus \mathtt{d}$. We refer the details of Feller measure to \cite{CF12, CFY06}. In what follows, we shall first compute the local term of \eqref{TR} and then formulate the Feller measure $U$.  In fact, for any $\varphi\in C^1_c(\mathbb{R}^2)$ and $u\in\FF^W_\e$, 
\[\int_{\mathbb{R}^2}\varphi d\mu_{\langle u\rangle}=2\EE^W(u\varphi,u)-\EE^W(u^2,\varphi)=\int_0^{\infty}\int_{S^1}u'_{\theta}(r)^2\varphi_{\theta}(r)\eta(d\theta)dr.\]
This implies
\[ d\mu_{\langle u\rangle}=u'_{\theta}(r)^2\eta(d\theta)dr.\]
Since $\mathbf{H}_Ff=f$ on $F$, it follows that
\begin{equation}\label{EM}
\mu_{\langle\mathbf{H}_Ff\rangle}(F)=\int_Ff'_{\theta}(r)^2\eta(d\theta)dr
=\int_{S^1}\mathbf{D}^{(a)}(f_{\theta},f_{\theta})\eta(d\theta).
\end{equation}
To formulate the Feller measure $U$, take two non-negative bounded functions $\phi$ and $\psi$ on $F$ such that $\phi\cdot\psi\equiv0$. From \cite[(5.5.13) and (5.5.14)]{CF12}, we know that
\begin{equation}\label{EQ3UPP}
U(\phi\otimes\psi)=\uparrow\lim_{\lambda\uparrow\infty}\lambda(\mathbf{H}^{\lambda}_F\phi, \mathbf{H}_F\psi)_{F^c}. 
\end{equation}
Substituting \eqref{HD} and \eqref{HDA} in \eqref{EQ3UPP}, we obtain
\[
\begin{split}
U(\phi\otimes\psi)
& =\lim_{\lambda\uparrow\infty}\lambda
\bar{\phi}(a)\bar{\psi}(a)\int_0^a\left(\dfrac{\sinh(\sqrt{2\lambda}(a-r))}{\sinh(\sqrt{2\lambda}a)\cosh(\sqrt{2\lambda}a)}+\dfrac{(a-r)\sinh(\sqrt{2\lambda}r)}{a\sinh(\sqrt{2\lambda}a)}
\right)dr\\ 
& =\dfrac{1}{2a}\int_{S^1}\int_{S^1}\phi(a,\theta_1)\psi(a,\theta_2)\eta(d\theta_1)\eta(d\theta_2). 
\end{split}
\]
This indicates $U$ is supported on $\partial F\times \partial F \setminus \mathtt{d}$ and for $x=(r,\theta), y=(r',\theta')$ with $x\neq y$, 
\begin{equation}\label{EQ3UXY}
U(dx,dy)=\frac{1}{2a}\delta_{a}(dr)\delta_{a}(dr')\eta(d\theta)\eta(d\theta').
\end{equation}
Therefore, the expression of $\check{\EE}$ follows from \eqref{TR}, \eqref{EM} and \eqref{EQ3UXY}.

Finally, one can easily find that $\mathbf{T}_a(W^\s)$ is associated with $(\check{\EE},\check{\FF})$ in the light of Theorem~\ref{PRO314}. That completes the proof. 
\end{proof}

\section{Stiff problem related to Walsh's Brownian motion}\label{SEC5}

In this section, we shall study the stiff problem related to the Walsh's Brownian motion and build a phase transition for it. 

\subsection{Mosco convergence}\label{MOS}
As in \cite{LS18}, we shall use Mosco convergence to describe the phase transition. For readers' convenience, we repeat its definition for handy reference. More details are referred to \cite{U94}. 

Let $(\EE^n,\FF^n)$ and $(\EE,\FF)$ be closed forms on $L^2(E,m)$, and we extend the domains of $\EE$ and $\EE^n$ to $L^2(E,m)$ by letting
\[
\begin{aligned}
	\EE(u,u)&:=\infty, \quad u\in L^2(E,m)\setminus \FF, \\ 
	\EE^n(u,u)&:=\infty,\quad u\in L^2(E,m)\setminus \FF^n.
\end{aligned}
\]
Then $(\EE^n,\FF^n)$ is said to be convergent to $(\EE,\FF)$ in the sense of Mosco as $n\rightarrow\infty$, if
\begin{itemize}
\item[(1)] For any sequence $\{u_n:n\geq 1\}\subset L^2(E,m)$ that converges weakly to $u$ in $L^2(E,m)$, it holds that
\[
	\EE(u,u)\leq \liminf_{n\rightarrow \infty}\EE^n(u_n,u_n). 
\]
\item[(2)] For any $u\in L^2(E,m)$, there exists a sequence $\{u_n:n\geq 1\}\subset L^2(E,m)$ that converges strongly to $u$ in $L^2(E,m)$ such that
\[
	\EE(u,u)\geq \limsup_{n\rightarrow \infty}\EE^n(u_n,u_n). 
\]
\end{itemize}
Here we say $u_n$ converges to $u$ weakly in $L^2(E,m)$, if for any $v\in L^2(E,m)$, $(u_n,v)_{L^2(E,m)}\rightarrow (u,v)_{L^2(E,m)}$ as $n\rightarrow \infty$, and strongly in $L^2(E,m)$, if $\|u_n-u\|_{L^2(E,m)}\rightarrow \infty$. The notations $(\cdot, \cdot)_{L^2(E,m)}$ and $\|\cdot\|_{L^2(E,m)}$ stand for the inner product and norm of $L^2(E,m)$. 

\subsection{Phase transition of stiff problem}

The stiff problem related to the WBM is described as follows. For $\varepsilon>0$, let $b_\varepsilon$ be a function on $[0,\varepsilon)$ such that for two constants $\delta_\varepsilon, C_\varepsilon>0$, 
\[
	\delta_\varepsilon\leq b_\varepsilon(r) \leq C_\varepsilon,\quad \text{a.e. }r\in [0,\varepsilon).  
\]
For any $f,g\in H^1(\mathbb{R})$, set 
\[
	\mathbf{D}^\varepsilon(u,v):= \int_0^\varepsilon b_\varepsilon(r) u'(r)v'(r)dr+\int_\varepsilon^\infty u'(r)v'(r)dr
\]
and define
\[
\begin{aligned}
	\FF^\varepsilon &:=\FF^W,\\
	\EE^\varepsilon(f,g) &:=\frac{1}{2}\int_{S^1}\mathbf{D}^\varepsilon(f_\theta, g_\theta)\eta(d\theta),\quad f,g\in \FF,
\end{aligned}
\]
where $f_\theta(\cdot):=f(\cdot, \theta), g_\theta(\cdot):=g(\cdot, \theta)$ as in \S\ref{SEC33}. Note that for any $f\in \FF^\varepsilon=\FF^W$, 
\[
	\delta_\varepsilon\wedge 1\cdot \EE^W_1(f,f)\leq \EE^\varepsilon_1(f,f)\leq C_\varepsilon\vee 1\cdot \EE^W_1(f,f). 
\]
This implies $(\EE^\varepsilon, \FF^\varepsilon)$ is regular on $L^2(\mathbb{R}^2,m)$. Roughly speaking, $(\EE^\varepsilon,\FF^\varepsilon)$ is obtained by attaching a small barrier at $B(0,\varepsilon):=\{x\in \mathbb{R}^2: |x|<\varepsilon\}$ to WBM. Then the stiff problem is concerned with the convergence of $(\EE^\varepsilon,\FF^\varepsilon)$ as $\varepsilon\downarrow 0$. This is also the main purpose of this section. 

Before moving on to the principal theorem, we prepare some notations.
Take a decreasing sequence $\varepsilon_n\downarrow 0$ and write $b_n, (\EE^n,\FF^n)$ for $b_{\varepsilon_n}, (\EE^{\varepsilon_n},\FF^{\varepsilon_n})$. Set 
\[
	\bar{\gamma}(n):=\int_0^{\varepsilon_n}\frac{1}{b_{\varepsilon_n}(r)}dr. 
\]
This parameter plays the role of total thermal resistance of the barrier $B(0,\varepsilon_n)$ as explained in \cite[Remark~4.3 and \S4.4]{LS18}.
The following result builds a phase transition for the stiff problem related to WBM. It is worth noting that the technical condition appeared in \cite[Theorem~4.6]{LS18} is not imposed.

\begin{theorem}\label{THM51}
Let $\varepsilon_n, b_n, (\EE^n,\FF^n)$ and $\bar{\gamma}(n)$ be given above. Assume 
\[
	\bar{\gamma}:=\lim_{n\rightarrow \infty}\bar{\gamma}(n)\quad (\leq \infty)
\]
exists. Then the following assertions hold:
\begin{itemize}
\item[(1)] $\bar{\gamma}=\infty$: $(\EE^n, \FF^n)$ converges to the Dirichlet form $(\EE, \FF)$ of reflecting WBM on $\mathbb{G}^2$ given by \eqref{EQ4FFL} in the sense of Mosco.
\item[(2)] $0<\bar{\gamma}<\infty$: $(\EE^n, \FF^n)$ converges to the Dirichlet form $(\EE^\s, \FF^\s)$ of SNOWB on $\mathbb{G}^2$ given by \eqref{EQ3FSFF} with the parameter $\kappa=(2\bar{\gamma})^{-1}$ in the sense of Mosco.
\item[(3)] $\bar{\gamma}=0$: $(\EE^n, \FF^n)$ converges to the Dirichlet form $(\EE^W, \FF^W)$ of WBM on $\mathbb{R}^2$ given by \eqref{EQ3FWF} in the sense of Mosco. 
\end{itemize}
\end{theorem}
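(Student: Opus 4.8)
We need to prove Mosco convergence of $(\EE^n, \FF^n)$ to three different limit forms depending on whether $\bar\gamma$ is $\infty$, finite-positive, or $0$. The state space changes in the limit from $\mathbb{R}^2$ to $\mathbb{G}^2$ (for cases 1,2) or stays $\mathbb{R}^2$ (case 3).

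**Key technical issue: the spaces differ.**

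The approximating forms live on $L^2(\mathbb{R}^2, m)$ but the limits (cases 1,2) live on $L^2(\mathbb{G}^2, m)$. So there must be some identification. Since $\mathbb{G}^2 = [0+,\infty)\times S^1$ and $\mathbb{R}^2 = [0,\infty)\times S^1$ with origin collapsed, the $L^2$ spaces are actually the same (the origin is a single point of measure zero, and $\mathbb{G}^2$ "un-collapses" it into $S^1$, still measure zero in $dr\,\eta(d\theta)$). So $L^2(\mathbb{R}^2,m) = L^2(\mathbb{G}^2,m)$ as Hilbert spaces. Good - the Mosco convergence is genuinely on one fixed $L^2$ space.

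**Structure of the proof.**

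For Mosco convergence I need two things for each case:
- **(Liminf / weak lower bound):** $u_n \rightharpoonup u$ weakly $\Rightarrow \EE(u,u) \le \liminf \EE^n(u_n, u_n)$.
- **(Recovery sequence):** For each $u$, find $u_n \to u$ strongly with $\EE(u,u) \ge \limsup \EE^n(u_n,u_n)$.

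**The core computation — reduce to the radial/one-dimensional problem.**

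Everything decouples over $\theta$ via $\int_{S^1}(\cdots)\eta(d\theta)$. On each ray, $\EE^n$ is a one-dimensional stiff-problem energy: the barrier $[0,\varepsilon_n)$ with conductivity $b_n$, matching the 1D setup of [LS18]. The essential quantity is $\bar\gamma(n) = \int_0^{\varepsilon_n} b_n^{-1}\,dr$, the thermal resistance across the barrier.

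The **key heuristic** (the engine of all three cases): for a function with $u(\varepsilon_n,\theta)$ on the outer side and $u(0,\theta)$ on the inner, the minimal barrier energy to bridge them is, by Cauchy–Schwarz,
$$\frac{1}{2}\int_0^{\varepsilon_n} b_n(r)u'_\theta(r)^2\,dr \ge \frac{(u_\theta(\varepsilon_n)-u_\theta(0))^2}{2\int_0^{\varepsilon_n}b_n^{-1}\,dr} = \frac{(u_\theta(\varepsilon_n)-u_\theta(0))^2}{2\bar\gamma(n)},$$
with equality for the harmonic profile $u'_\theta(r) \propto b_n(r)^{-1}$. This single inequality produces the three regimes:
- $\bar\gamma=\infty$: barrier cost $\to 0$, so the barrier becomes a reflecting insulator — values on the two sides decouple, giving reflecting WBM (the penalty term vanishes, and in the limit $u$ need not be continuous at origin but the barrier contributes nothing).
- $0<\bar\gamma<\infty$: cost $\to (u_\theta(0)-u_{\theta'}(0))^2/(2\bar\gamma)$ summed over $\theta,\theta'$ — exactly the SNOWB jump-penalty with $\kappa = (2\bar\gamma)^{-1}$.
- $\bar\gamma=0$: infinite penalty for any discontinuity, forcing continuity at the origin — recovering WBM.

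**The proof plan in order.**

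First I'd establish the $L^2$-space identification and note the $\theta$-decoupling. Then, for each case:

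*(Liminf bound):* Given $u_n\rightharpoonup u$ with $\liminf \EE^n(u_n,u_n) =: L < \infty$, pass to a subsequence achieving the liminf with uniformly bounded energies. Split each $\EE^n(u_n,u_n)$ into the outer part ($\int_\varepsilon^\infty$) and barrier part. The outer Dirichlet integrals are uniformly bounded, giving (via weak lower semicontinuity of the Dirichlet form on $(\varepsilon,\infty)$ and a trace/compactness argument) control of $u_\theta$ on $(0,\infty)$ plus boundary values. The barrier part is bounded below by the Cauchy–Schwarz quantity above. I'd need to show the **boundary traces** $u_n(\varepsilon_n,\theta)$ and $u_n(0,\theta)$ converge appropriately — this is where a uniform trace inequality (like the one in Lemma~\ref{LM313}) and an Arzelà–Ascoli / Sobolev-embedding argument on $H^1$ pieces are needed.

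*(Recovery sequence):* For target $u$, build $u_n$ equal to $u$ on $[\varepsilon_n,\infty)$ and set it on the barrier to be the **harmonic profile** interpolating the prescribed inner/outer boundary values (achieving equality in Cauchy–Schwarz), choosing inner values to match what the limit form requires. Verify $L^2$ strong convergence (the barrier shrinks to measure zero) and the energy limit equals $\EE(u,u)$.

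**The main obstacle.**

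The hardest part will be the **liminf inequality in the intermediate regime** $(0<\bar\gamma<\infty)$, specifically controlling the boundary traces at $r=0$ and $r=\varepsilon_n$ simultaneously and passing to the limit inside the double integral $\int_{S^1\times S^1}(\cdots)\eta(d\theta)\eta(d\theta')$. Weak $L^2$ convergence of $u_n$ gives very little pointwise/trace information by itself; one must upgrade it using the uniform energy bound to get convergence of traces $u_{n,\theta}(0)\to u_\theta(0)$ for $\eta$-a.e.\ $\theta$ (along a subsequence), then apply Fatou to the double integral. Handling the measure-zero exceptional sets in $\theta$ uniformly, and ensuring the $\varepsilon_n\to0$ limit of the barrier boundary values equals the outer limit value, is the delicate point. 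The technical-condition-free improvement over [LS18, Theorem~4.6] presumably comes from exploiting the clean product structure $m = dr\,\eta(d\theta)$ and the explicit harmonic profiles, which I'd lean on to make the trace convergence uniform enough to avoid extra hypotheses.
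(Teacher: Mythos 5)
Your identification of the two core estimates is exactly right and matches the paper: the Cauchy--Schwarz lower bound
$\tfrac12\int_0^{\varepsilon_n}b_n(r)(u'_\theta(r))^2dr \ge \big(u_\theta(\varepsilon_n)-u_\theta(0)\big)^2/(2\bar\gamma(n))$
is the engine of the whole proof, and your recovery sequence (equal to a shift of $g$ outside the barrier, harmonic profile $c+\frac{g(0,\theta)-c}{\bar\gamma(n)}\int_0^r b_n(\rho)^{-1}d\rho$ inside) is precisely the one the paper uses, for which the energy is computed exactly. However, your plan for the liminf inequality has a genuine gap. You propose to upgrade weak $L^2$ convergence plus the energy bound to convergence of the traces $u_n(\varepsilon_n,\theta)\to u(0,\theta)$ via ``a uniform trace inequality and an Arzel\`a--Ascoli / Sobolev-embedding argument,'' and then pass to the limit in the double integral by Fatou. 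This cannot work as stated: the forms $\EE^n$ contain no derivatives in $\theta$, so there is no compactness whatsoever in the angular variable, and weak $L^2$ convergence carries no pointwise or trace information on the fibers. Trace convergence of the approximating sequence is simply not available, and the paper never proves it.

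What the paper does instead avoids traces of the approximating sequence entirely. Given $f^n\rightharpoonup f$ with energies bounded by $M$, it transplants $f^n$ to $\mathbb{G}^2$ by the shift $\breve f^n(r,\theta):=f^n(r+\varepsilon_n,\theta)$ and shows $\|f^n-\breve f^n\|_{L^2}^2\le 2\varepsilon_n(\bar\gamma(n)+\varepsilon_n)M\to 0$ (the same Cauchy--Schwarz estimate, applied across a window of width $\varepsilon_n$), so that $\breve f^n\rightharpoonup f$ as well. For $0<\bar\gamma<\infty$ it then bounds the jump term of $\EE^\s(\breve f^n,\breve f^n)$, namely $\frac{\kappa}{2}\int\big(f^n_{\theta_1}(\varepsilon_n)-f^n_{\theta_2}(\varepsilon_n)\big)^2\eta(d\theta_1)\eta(d\theta_2)$, by $2\kappa\bar\gamma(n)$ times the barrier part of $\EE^n(f^n,f^n)$ --- crucially using that $f^n\in\FF^W$ is continuous at the origin, so $f^n_{\theta_1}(0)=f^n_{\theta_2}(0)$ --- whence $\EE^\s(\breve f^n,\breve f^n)\le(1+o(1))\,\EE^n(f^n,f^n)$, and concludes by weak lower semicontinuity of the single fixed closed form $\EE^\s$ applied along $\breve f^n\rightharpoonup f$. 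No limit of boundary values is ever taken. Two further points your plan misses: (i) in case (3) one must show the weak limit $f$ actually lies in $\FF^W$, i.e.\ is continuous at the origin; the paper gets this by Banach--Saks (Ces\`aro means of $\breve f^n$ converge in $\EE_1$-norm, hence quasi-everywhere), Fatou, and the bound $2M\bar\gamma(n)\to 0$ on the jump integral --- your heuristic ``infinite penalty forces continuity'' needs exactly this mechanism, not a trace limit. (ii) The shift argument requires $\varepsilon_n\bar\gamma(n)\to 0$, which is automatic when $\bar\gamma<\infty$ but can fail when $\bar\gamma=\infty$; the paper therefore gives a separate argument for case (1), extending $f^n$ inside the barrier by its boundary value $f^n(\varepsilon_n,\theta)$ and verifying weak convergence via an $H^1$ trace estimate integrated in $\theta$ --- a case split your proposal does not anticipate.
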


\begin{proof}
The idea of the proof stems from those of Theorem 4.6 and Corollary 4.8 in \cite{LS18}. Let $(\EE^\dagger, \FF^\dagger)$ be one of $(\EE,\FF)$, $(\EE^\s,\FF^\s)$ and $(\EE^W,\FF^W)$. Write $H=L^2(\mathbb{R}^2,m)=L^2(\mathbb{G}^2,m)$, $a_n(r):=b_n(r)$ for $r\in [0,\varepsilon_n)$ and $a_n(r)=1$ for $r\geq \varepsilon_n$. 

We first prove the assertions under the assumption $\lim_{n\rightarrow\infty}\varepsilon_n\bar{\gamma}(n)=0$. To show the first part of Mosco convergence in \S\ref{MOS}, suppose $\{f^n\}$ converges to $f$ weakly in $H$ and 
\[
	\liminf_{n\rightarrow\infty}\EE^n(f^n,f^n)\leq \sup_{n\geq 1}\EE^n(f^n,f^n)=:M<\infty.
\]
Recall that $\mathbf{T}_n:=\mathbf{T}_{\varepsilon_n}: \mathbb{G}^2\rightarrow \mathbb{R}^2\setminus B(0,\varepsilon_n)$ is a homeomorphism. Set $\breve{f}^n:=f^n\circ \mathbf{T}_n$, i.e. $\breve{f}^n(r,\theta):=f^n(r+\varepsilon_n, \theta)$ for any $(r,\theta)\in \mathbb{G}^2$. We claim $\|f^n-\breve{f}^n\|_H\rightarrow 0$ as $n\rightarrow \infty$ and particularly, $\breve{f}^n$ converges to $f$ weakly in $H$. Indeed,  
\[
\begin{aligned}
\|f^n-\breve{f}^n\|_H^2 &=\int_{S^1}\eta(d\theta)\int_0^\infty \left(f^n_\theta(r+\varepsilon_n)-f^n_\theta(r)\right)^2dr \\
&=\int_{S^1}\eta(d\theta)\int_0^\infty \left(\int_r^{r+\varepsilon_n} 
\nabla f^n_\theta(\rho)d\rho\right)^2dr \\
&\leq \int_{S^1}\eta(d\theta)\int_0^\infty \left(\int_r^{r+\varepsilon_n} a_n(\rho)
\nabla f^n_\theta(\rho)^2d\rho\right)\cdot\left(\int_r^{r+\varepsilon_n}\frac{1}{a_n(\rho)}d\rho\right) dr \\
&\leq (\bar{\gamma}(n)+\varepsilon_n)\int_{S^1}\eta(d\theta)\int_0^\infty \left(\int_r^{r+\varepsilon_n} a_n(\rho)
\nabla f^n_\theta(\rho)^2d\rho\right) dr \\
&\leq (\bar{\gamma}(n)+\varepsilon_n)\int_{S^1}\eta(d\theta)\int_0^\infty a_n(\rho)
\nabla f^n_\theta(\rho)^2d\rho \int_{(\rho-\varepsilon_n)\vee 0}^\rho dr\\
&\leq 2\varepsilon_n(\bar{\gamma}(n)+\varepsilon_n)M.
\end{aligned}
\]
Then it follows from $\lim_{n\rightarrow \infty}\varepsilon_n \bar{\gamma}(n)=0$ that $\|f^n-\breve{f}^n\|_H\rightarrow 0$. Now we prove $\EE^\dagger(f,f)\leq \liminf_{n\rightarrow \infty}\EE^n(f^n,f^n)$ for the three cases respectively.
\begin{itemize}
\item[(1)] $\bar{\gamma}=\infty$: Clearly $\EE(f,f)\leq \liminf_{n\rightarrow \infty}\EE(\breve{f}^n,\breve{f}^n)\leq \liminf_{n\rightarrow \infty}\EE^n(f^n,f^n)$.  
\item[(2)] $0<\bar{\gamma}<\infty$: Note that
\[
	\EE^\s(\breve{f}^n,\breve{f}^n)=\frac{1}{2}\int_{S^1}\int_{\varepsilon_n}^\infty (f^n_\theta)'(r)^2dr\eta(d\theta)+\frac{\kappa}{2}\int\left(f^n_{\theta_1}(\varepsilon_n)-f^n_{\theta_2}(\varepsilon_n)\right)^2\eta(d\theta_1)\eta(d\theta_2). 
\]
Since $f_{\theta_1}^n(0)=f_{\theta_2}^n(0)$, it follows that
\begin{equation}\label{EQ5SSF}
\begin{split}
\dfrac{1}{2}\int_{S^1\times S^1}&\left(f^n_{\theta_1}(\varepsilon_n)-f^n_{\theta_2}(\varepsilon_n)\right)^2\eta(d\theta_1)\eta(d\theta_2)\\
&=\dfrac{1}{2}\int_{S^1\times S^1}\left(\int_0^{\varepsilon_n}(f^n_{\theta_1})'(r)dr-\int_0^{\varepsilon_n}(f^n_{\theta_2})'(r)dr\right)^2\eta(d\theta_1)\eta(d\theta_2)\\
&\leq\int_{S^1}\left(\int_0^{\varepsilon_n}(f^n_{\theta})'(r)dr\right)^2\eta(d\theta) \\
&\leq \bar{\gamma}(n)\int_{S^1}\int^{\varepsilon_n}_{0}b_n(x)(f^n_\theta)'(r)^2dr\eta(d\theta).
\end{split}
\end{equation}
Thus we can conclude $\EE^\s(f,f)\leq \liminf_{n\rightarrow \infty}\EE^\s(\breve{f}^n,\breve{f}^n)\leq \liminf_{n\rightarrow \infty}\EE^n(f^n,f^n)$ from $\kappa \cdot \bar{\gamma}_n\rightarrow 1/2$.
\item[(3)] $\bar{\gamma}=0$: 
Since $\sup_{n}\EE(\breve{f}^n,\breve{f}^n)\leq \sup_n \EE^n(f^n,f^n)\leq M$, and the weak convergence of $\breve{f}_n$ in $H$ implies $\sup_n\|\breve{f}^n\|_H<\infty$, it follows that  $\sup_{n}\EE_1(\breve{f}^n,\breve{f}^n)<\infty$. By Banach-Saks theorem, take a subsequence if necessary, the Ces\`aro mean of $\{\breve{f}^n\}$ converges to some $h\in \FF$ in $\|\cdot \|_{\EE_1}$-norm. Then $h_k:=\frac{1}{k}\sum_{n=1}^k\breve{f}_n$ is $\EE_1$-convergent to $h$. That means $h_k$ converges to $h$, $\EE$-q.e. We claim that $h=f$. Indeed, take any $u\in H$, we have $(\breve{f}_n, u)_H\rightarrow (f, u)_H$ and 
\[
(h,u)_H=\lim_{k\rightarrow \infty} (h_k, u)_H=\lim_{k\rightarrow \infty}\frac{1}{k}\sum_{n=1}^k (\breve{f}_n, u)_H=(f,u)_H.
\]
We have proved in the case $\bar{\gamma}=\infty$ that $f\in\FF$ and $\EE(f,f)\leq \liminf_{n\rightarrow \infty}\EE^n(f^n,f^n)$, and it suffices to show $f\in \FF^W$. Note that if $A\subset \{0\}\times S^1$ is $\EE$-polar, then $\left(\delta_0\times \eta\right)(A)=0$ by Lemma~\ref{LM313}. Thus $f(0,\cdot)$ is $\eta$-a.e. defined on $S^1$. Let
\[
\begin{aligned}
&c_\#:=\inf\left\{c\in \mathbb{R}: \eta(f(0,\cdot)>c)=0\right\}, \\
&c^\#:=\sup\left\{c\in \mathbb{R}: \eta(f(0,\cdot)<c)=0\right\}. 
\end{aligned}\] 
Clearly, $c^\#\leq f(0,\cdot)\leq c_\#$, $\eta$-a.e. We need only show $c^\#=c_\#$. Suppose $c^\#<c_\#$. Take $c^\#<c<c_\#$ and we have $\eta(f(0,\cdot)>c)>0, \eta(f(0,\cdot)<c)>0$. This implies 
\begin{equation}\label{EQ5SSFT}
	\int_{S^1\times S^1} \left(f(0,\theta_1)-f(0,\theta_2)\right)^2\eta(d\theta_1)\eta(d\theta_2)>0. 
\end{equation}
However, by Fatou lemma we obtain
\[
\begin{aligned}
\int &\left(f(0,\theta_1)-f(0,\theta_2)\right)^2\eta(d\theta_1)\eta(d\theta_2) \\
&\qquad=\int \lim_{k\rightarrow \infty}\left(h_k(0,\theta_1)-h_k(0,\theta_2)\right)^2\eta(d\theta_1)\eta(d\theta_2) \\
&\qquad \leq \liminf_{k\rightarrow \infty}\frac{1}{k}\sum_{n=1}^k\int \left(\breve{f}^n(0,\theta_1)-\breve{f}^n(0,\theta_2) \right)^2 \eta(d\theta_1)\eta(d\theta_2).
\end{aligned}
\]
It follows from \eqref{EQ5SSF} that
\[
\begin{aligned}
	\int &\left(\breve{f}^n(0,\theta_1)-\breve{f}^n(0,\theta_2) \right)^2 \eta(d\theta_1)\eta(d\theta_2) \\
	&\qquad = \int \left({f}^n(\varepsilon_n,\theta_1)-{f}^n(\varepsilon_n,\theta_2) \right)^2 \eta(d\theta_1)\eta(d\theta_2) \\
	&\qquad \leq 2M\bar{\gamma}(n) \\
	&\qquad \rightarrow 0. 
	\end{aligned}\]
Hence $\int \left(f(0,\theta_1)-f(0,\theta_2)\right)^2\eta(d\theta_1)\eta(d\theta_2)=0$, which contradicts \eqref{EQ5SSFT}. 
\end{itemize}
To prove the second part of Mosco convergence, let $g\in H$ with $\EE^\dagger(g,g)<\infty$. Denote $c:=\int_{S^1}g(0,\theta)\eta(d\theta)$ and define a function $g^n\in \FF^n$ as follows: For any $r\geq \varepsilon_n$, set $g^n(r,\theta):=g(r-\varepsilon_n,\theta), \forall \theta\in S^1$ and for $r\in [0,\varepsilon_n), \theta\in S^1$, set
\[
	g^n(r,\theta):=c+\frac{g(0, \theta)-c}{\bar{\gamma}(n)}\int_0^r\frac{1}{b_n(\rho)}d\rho.
\]  
It is easy to see $g^n\rightarrow g$ strongly in $H$ and we have
\[
\EE^n(g^n,g^n)=\frac{1}{2}\int_{S^1}\int_0^\infty g'_\theta(r)^2dr\eta(d\theta)+\frac{1}{2\bar{\gamma}(n)}\int_{S^1}(g(0, \theta)-c)^2\eta(d\theta). 
\]
Note that 
\[
	2\int_{S^1}(g(0, \theta)-c)^2\eta(d\theta)=\int_{S^1\times S^1}(g(0,\theta_1)-g(0,\theta_2))^2\eta(d\theta_1)\eta(d\theta_2)
\]
on account of the notation $c=\int g(0,\theta)\eta(d\theta)$. Then for the case $\bar{\gamma}=\infty$ or $0<\bar{\gamma}<\infty$, we can conclude that $\lim_{n\rightarrow \infty}\EE^n(g^n,g^n)=\EE(g,g)$ or $\EE^\s(g,g)$ respectively. For the case $\bar{\gamma}=0$, it suffices to note that $g\in \FF^W$ implies $g(0,\theta)=c$ and thus $\EE^n(g^n,g^n)=\EE^W(g,g)$.

Finally, we prove the case $\bar{\gamma}=\infty$ without the assumption $\lim_{n\rightarrow\infty}\varepsilon_n\bar{\gamma}(n)=0$. On one hand, suppose $\{f^n\}$ converges to $f$ weakly in $H$ and $$\liminf_{n\rightarrow\infty}\EE^n(f^n,f^n)\leq \sup_{n\geq 1}\EE^n(f^n,f^n)=:M<\infty.$$ Set
\[
\breve{f}^n|_{B(0,\varepsilon_n)^c}:= f^n|_{B(0,\varepsilon_n)^c},\quad \breve{f}^n(r,\theta):=f^n(\varepsilon_n, \theta),\quad r\in [0+,\varepsilon_n), \theta\in S^1. 
\]
Clearly,  $\breve{f}^n\in \FF$, and we claim that $\breve{f}^n$ converge to $f$ weakly in $H$. Indeed, for any $g\in H$, we have
\[
\left(f^n-\breve{f}^n, g\right)_H=\int_{B_n}f^n_{\theta}(r)g_{\theta}(r)dr\eta(d\theta)-\int_{B_n}f^n_{\theta}(\varepsilon_n)g_{\theta}(r)dr\eta(d\theta).
\]
The weak convergence of $\{f^n\}$ implies $K:=\sup_{n}\|f^n\|^2_{H}<\infty$. Thus as $n\rightarrow \infty$, 
\[
	\left|\int_{B_n} f^n_{\theta}(r)g_{\theta}(r)dr\eta(d\theta)\right|^2\leq K\cdot \int_{B_n}g_{\theta}(r)^2dr\eta(d\theta)\rightarrow 0. 
\]
Since $f^n\in\FF^n$, $f^n_{\theta}(r)$ is absolutely continuous on $[\varepsilon_n,\infty)$ for $\theta\in A\subset S^1$ with $\eta(A)=1$. For any $\theta\in A$ and $r>\varepsilon_n$, we have
$|f^n_{\theta}(\varepsilon_n)|\leq |f^n_{\theta}(r)|+|\int_{\varepsilon_n}^r (f^n_{\theta})'(y)dy|$ and
\begin{equation}\label{EQ4FNN}
\begin{split}
	\int_{S^1}|f^n_{\theta}(\varepsilon_n)|^2\eta(d\theta)
	& \leq 2\int_{S^1}\int_{\varepsilon_n}^{1+\varepsilon_n} f^n_{\theta}(r)^2dr\eta(d\theta)+2\int_{S^1}\int_{\varepsilon_n}^\infty  (f^n_{\theta})'(y)^2dy\eta(d\theta)\\
	& \leq 4\EE^n_1(f^n,f^n). 
\end{split}
\end{equation}
This implies as $n\rightarrow 0$,
\[
\left|\int_{B_n}f^n_{\theta}(\varepsilon_n)g_{\theta}(r)dr\eta(d\theta)\right|^2\leq 4(M+K)\cdot \int_{B_n}g_{\theta}(r)^2dr\eta(d\theta)\rightarrow 0. 
\]
Thus we can conclude that $\breve{f}^n$ converge to $f$ weakly in $H$. As a result,
\[
\EE(f,f)\leq \liminf_{n\rightarrow \infty}\EE(\breve{f}^n,\breve{f}^n)\leq \liminf_{n\rightarrow \infty}\EE^n(f^n,f^n). 
\]
On the other hand, let $g\in H$ with $\EE(g,g)<\infty$. Denote $c_n:=\int_{S^1}g(\varepsilon_n,\theta)\eta(d\theta)$ and for each $n$, define
\[
g^n|_{B(0,\varepsilon_n)^c}:= g|_{B(0,\varepsilon_n)^c},\quad g^n(r,\theta):=c_n+\frac{g(\varepsilon_n,\theta)-c_n}{\bar{\gamma}(n)}\int_0^r\frac{1}{b_n(\rho)}d\rho. 
\]
We assert that $\|g^n-g\|_H\rightarrow 0$ as $n\rightarrow 0$. In fact, for any $r\in(0,\varepsilon_n]$, $|g_{\theta}^n(r)|^2\leq 2c_n^2+2\left(g_{\theta}(\varepsilon_n)-c_n\right)^2$, so that
\[
\int_{S^1}|g_{\theta}^n(r)|^2\eta(d\theta)\leq 6\int_{S^1}|g_{\theta}(\varepsilon_n)|^2\eta(d\theta)\leq 24 \EE_1(g,g).
\] 
The last inequality is similar to \eqref{EQ4FNN}, and it follows that
\[
\begin{split}
\|g^n-g\|_H
& \leq 2\int_{B_n} g_{\theta}(r)^2dr\eta(d\theta)+2\int_0^{\varepsilon_n}\int_{S^1}|g_{\theta}^n(r)|^2\eta(d\theta)dr\\
& \leq 2\int_{B_n} g_{\theta}(r)^2dr\eta(d\theta)+48\EE_1(g,g)\cdot\varepsilon_n\rightarrow 0
\end{split}
\]
as $n\rightarrow\infty$. 
Then we have
\[
\begin{split}
	\EE^n(g^n,g^n)
	& =\frac{1}{2}\int_{S^1}\int_{\varepsilon_n}^\infty g'_\theta(r)^2dr\eta(d\theta)+\frac{1}{2\bar{\gamma}(n)}\int_{S^1}(g(\varepsilon_n, \theta)-c)^2\eta(d\theta)\\
	& \leq \frac{1}{2}\int_{S^1}\int_{\varepsilon_n}^\infty g'_\theta(r)^2dr\eta(d\theta)+\frac{2}{\bar{\gamma}(n)}\int_{S^1}|g_{\theta}(\varepsilon_n)|^2\eta(d\theta)\\
	&\leq \frac{1}{2}\int_{S^1}\int_{\varepsilon_n}^\infty g'_\theta(r)^2dr\eta(d\theta)+\frac{8}{\bar{\gamma}(n)}\EE_1(g,g).
\end{split}
\]
Since $\bar{\gamma}(n)\rightarrow \infty$, we can conclude
\[
	\limsup_{n\rightarrow \infty}\EE^n(g^n,g^n)=\limsup_{n\rightarrow \infty}\frac{1}{2}\int_{S^1}\int_{\varepsilon_n}^\infty g'_\theta(r)^2dr\eta(d\theta)\leq \EE(g,g). 
\]
That completes the proof. 
\end{proof}

\begin{remark}
If we take $b_\varepsilon(r):=(\kappa\varepsilon)^{-\alpha}$ for any $r\in [0,\varepsilon)$ with a fixed parameter $\kappa>0$, then the three phases in Theorem~\ref{THM51} correspond to $\alpha<-1$, $\alpha=-1$ and $\alpha>-1$ respectively. 
\end{remark}

\subsection{Continuity of phase transition}

We complete this section with a result, which states the continuity of phase transition in Theorem~\ref{THM51}. This continuity was considered for the stiff problems in one-dimensional space by means of Mosco convergence of Dirichlet forms in \cite{LS18}. 
To explain it, denote the Dirichlet form of the phase with parameter $\bar{\gamma}$ in Theorem~\ref{THM51} by $(\EE^{\bar{\gamma}},\FF^{\bar{\gamma}})$. More precisely,
\begin{itemize}
\item[(1)] $\bar{\gamma}=\infty$: $(\EE^\infty,\FF^\infty):=(\EE,\FF)$ given by \eqref{EQ4FFL};
\item[(2)] $\bar{\gamma}\in(0,\infty)$: $(\EE^{\bar{\gamma}},\FF^{\bar{\gamma}}):=(\EE^\s,\FF^\s)$ given by \eqref{EQ3FSFF} with the parameter $\kappa=(2\bar{\gamma})^{-1}$;
\item[(3)] $\bar{\gamma}=0$:  $(\EE^0,\FF^0):=(\EE^W,\FF^W)$ given by \eqref{EQ3FWF}. 
\end{itemize}  
The following theorem is an analogical result of \cite[Theorem~4.11]{LS18}. 
 

\begin{theorem}\label{THM43}
Let $\{\bar{\gamma}_n: n\geq 1\}$ be a sequence in $[0,\infty]$ such that $$\lim_{n\rightarrow\infty}\bar{\gamma}_n=\bar{\gamma}\in [0,\infty]. $$ Then $(\EE^{\bar{\gamma}_n},\FF^{\bar{\gamma}_n})$ converges to $(\EE^{\bar{\gamma}},\FF^{\bar{\gamma}})$ in the sense of Mosco as $n\rightarrow \infty$. 
\end{theorem}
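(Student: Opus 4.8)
The plan is to verify the two defining conditions of Mosco convergence directly, exploiting the fact that all three phases belong to a single monotone family. For $f\in\FF$ write
\[
A(f):=\tfrac12\int_{S^1}\mathbf{D}(f_\theta,f_\theta)\eta(d\theta),\qquad
J(f):=\int_{S^1\times S^1}\bigl(f(0,\theta)-f(0,\theta')\bigr)^2\eta(d\theta)\eta(d\theta'),
\]
so that, with the conventions $1/(4\cdot\infty)=0$ and $1/(4\cdot 0)=\infty$, every phase reads $\EE^{\bar\gamma}(f,f)=A(f)+\frac{1}{4\bar\gamma}J(f)$: the value $\bar\gamma=\infty$ deletes the second term and recovers $(\EE,\FF)$, while $\bar\gamma=0$ forces $J(f)=0$, i.e. $f\in\FF^W$ (recall $\kappa=(2\bar\gamma)^{-1}$, hence jump coefficient $\kappa/2=1/(4\bar\gamma)$). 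Two structural facts drive everything. First, for fixed $f$ the map $\bar\gamma\mapsto\EE^{\bar\gamma}(f,f)$ is non-increasing, since $J\ge 0$. Second, each fixed $\EE^{\bar\gamma}$ (for $\bar\gamma\in(0,\infty)$ the form $\EE^\s$ from Theorem~\ref{PRO314}, for $\bar\gamma=\infty$ the form $(\EE,\FF)$ from Lemma~\ref{LM31}) is a closed nonnegative form on $H:=L^2(\mathbb{G}^2,m)$, hence, being convex and strongly lower semicontinuous, is lower semicontinuous for the weak topology of $H$.

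The recovery (upper-bound) condition is the easy half: for a given $u$ I will take the constant sequence $u_n\equiv u$, which converges strongly. If $u$ lies outside the domain of $\EE^{\bar\gamma}$ there is nothing to prove, so assume $\EE^{\bar\gamma}(u,u)<\infty$. When $\bar\gamma\in(0,\infty]$ one has $u\in\FF$, $J(u)<\infty$, and $\EE^{\bar\gamma_n}(u,u)=A(u)+\frac{1}{4\bar\gamma_n}J(u)\to A(u)+\frac{1}{4\bar\gamma}J(u)=\EE^{\bar\gamma}(u,u)$. The only case needing a remark is $\bar\gamma=0$: then $u\in\FF^W$, so $J(u)=0$ and $\EE^{\bar\gamma_n}(u,u)=A(u)$ for every $n$, so the blow-up of $1/(4\bar\gamma_n)$ is harmless. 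Finitely many $n$ with $\bar\gamma_n\in\{0,\infty\}$ not matching $\bar\gamma$ are irrelevant, since $\limsup$ ignores a finite initial segment.

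The lower-bound (liminf) condition is the substance. Given $u_n\to u$ weakly in $H$ with $M:=\liminf_n\EE^{\bar\gamma_n}(u_n,u_n)<\infty$, pass to a subsequence realizing the liminf as a genuine limit. The device is comparison against a fixed form: fix $\rho\in(0,\infty)$ with $\rho>\bar\gamma$; since $\bar\gamma_n\to\bar\gamma<\rho$, eventually $\bar\gamma_n<\rho$, whence by monotonicity $\EE^{\bar\gamma_n}(u_n,u_n)\ge\EE^{\rho}(u_n,u_n)$. Weak lower semicontinuity of the fixed closed form $\EE^\rho$ then gives $\EE^{\rho}(u,u)\le\liminf_n\EE^{\rho}(u_n,u_n)\le M$. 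Now let $\rho\downarrow\bar\gamma$: as $\EE^\rho(u,u)=A(u)+\frac{1}{4\rho}J(u)$ increases to $A(u)+\frac{1}{4\bar\gamma}J(u)$, I obtain $\EE^{\bar\gamma}(u,u)\le M$ when $\bar\gamma\in(0,\infty)$; and for $\bar\gamma=0$ the bound $A(u)+\frac{1}{4\rho}J(u)\le M$ for all small $\rho>0$ forces $J(u)=0$, i.e. $u\in\FF^W$, and $A(u)\le M$, so $\EE^W(u,u)=A(u)\le M$. The regime $\bar\gamma=\infty$ is not reached by this comparison (there is no $\rho>\bar\gamma$), so I treat it directly by dropping the nonnegative jump term: $\EE^{\bar\gamma_n}(u_n,u_n)\ge A(u_n)$, and weak lower semicontinuity of $A=(\EE,\FF)$ yields $A(u)\le M=\EE^{\infty}(u,u)$.

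The main obstacle is precisely the jump/trace term $J$ in the liminf step: the map $f\mapsto f(0,\cdot)$ is not continuous under weak $L^2$ convergence, so $J$ itself is \emph{not} weakly lower semicontinuous, and the boundary regime $\bar\gamma=0$ (where the limit form lives on the strictly smaller space $\FF^W$) cannot be reached by a naive lower-semicontinuity bound. I sidestep this by never using lower semicontinuity of $J$ in isolation, but only of the genuinely closed forms $\EE^\rho$ for fixed $\rho>\bar\gamma$, and by recovering the boundary behaviour through the limit $\rho\downarrow\bar\gamma$; for $\bar\gamma=0$ it is exactly this passage that upgrades ``$\EE^\rho(u_n,u_n)$ bounded'' to ``$J(u)=0$''. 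An alternative, in the spirit of the proof of Theorem~\ref{THM51}(3), would replace $\{u_n\}$ by $\EE_1$-convergent Ces\`aro means via Banach--Saks, pass to quasi-everywhere and hence (by Lemma~\ref{LM313}) $\eta$-a.e. convergence of the traces, and apply Fatou to $J$; the comparison argument is shorter and treats the three regimes uniformly.
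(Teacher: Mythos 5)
Your proof is correct, and it takes a genuinely different route from the paper's at the decisive points. The paper splits the theorem into two regimes: for $0<\bar\gamma\le\infty$ its liminf step uses the exact identity $\EE^{\bar\gamma}(u_n,u_n)=\EE^{\bar\gamma_n}(u_n,u_n)+\bigl(\tfrac{1}{4\bar\gamma}-\tfrac{1}{4\bar\gamma_n}\bigr)\mathscr{U}_n$ together with the uniform bound $\mathscr{U}_n\le 4KM$ (from $\bar\gamma_n<K$ eventually) and weak lower semicontinuity of the fixed closed form $\EE^{\bar\gamma}$, while for $\bar\gamma=0$ it does not argue directly at all: it invokes the darning interpretation from Theorem~\ref{THM317} and cites the Mosco-convergence-to-darning result \cite[Theorem 4.3]{CP17}. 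Your monotone-comparison scheme --- $\EE^{\bar\gamma_n}\ge\EE^{\rho}$ eventually for any fixed $\rho\in(0,\infty)$ with $\rho>\bar\gamma$, weak lower semicontinuity of the single closed form $\EE^{\rho}$, then $\rho\downarrow\bar\gamma$ --- recovers the paper's conclusion for $0<\bar\gamma<\infty$ by a softer argument, and, more importantly, handles $\bar\gamma=0$ self-containedly: the bound $A(u)+\tfrac{1}{4\rho}J(u)\le M$ for all small $\rho>0$ forces $J(u)=0$, hence $u\in\FF^W$ and $\EE^W(u,u)\le M$, which replaces the external citation entirely. The recovery halves of the two proofs coincide (constant sequences), and your observation that $J<\infty$ on all of $\FF$ --- this is exactly the trace estimate in Theorem~\ref{PRO314} that yields $\FF^\s=\FF$ --- is indeed what makes the constant sequence admissible when $\bar\gamma=\infty$; your handling of sequence terms with $\bar\gamma_n\in\{0,\infty\}$ is also sound, since monotonicity of $\bar\gamma\mapsto A+\tfrac{1}{4\bar\gamma}J$ holds across the endpoints under your conventions, whereas the paper simply assumes $0<\bar\gamma_n<\infty$ without loss of generality. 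In short: the paper's route buys brevity at $\bar\gamma=0$ at the price of a nontrivial external result; yours buys a uniform, elementary treatment of all three regimes from the single monotone family, at the price of spelling out the standard fact that closed nonnegative forms are weakly lower semicontinuous.
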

\begin{proof}
Write $H=L^2(\mathbb{G}^2,m)=L^2(\mathbb{R}^2,m)$. 
Without loss of generality, we assume $0<\bar{\gamma}_n<\infty$ for any $n\geq 1$. 
As we have phrased in Theorem \ref{THM317}, $(\EE^0, \FF^0)$ is the darning of SNOWB by shorting $\{0+\}\times S^1$ into $\bf{0}$. The assertion of the case $\bar{\gamma}=0$ is implied by \cite[Theorem 4.3]{CP17}. In what follows, we shall consider the case $0<\bar{\gamma}\leq\infty$. 

Let us prove the first item in the definition of Mosco convergence. Assume $\{u_n: n\geq 1\}\subset H$ converges weakly to $u$ in $H$ and 
\[
	\liminf_{n\rightarrow \infty} \EE^{\bar{\gamma}_n}(u_n,u_n)\leq \sup_n  \EE^{\bar{\gamma}_n}(u_n,u_n)=:M<\infty. 
\]
For the case $\bar{\gamma}=\infty$, we have
\[
	\EE^\infty(u,u)\leq \liminf_{n\rightarrow \infty}\EE^\infty(u_n,u_n)\leq \liminf_{n\rightarrow \infty} \EE^{\bar{\gamma}_n}(u_n,u_n).
\]
For the case $0<\bar{\gamma}<\infty$, take a constant $\bar{\gamma}<K<\infty$. Then there exists an integer $N$ such that for any $n>N$, $\bar{\gamma}_n<K$. This leads to
\[
\mathscr{U}_n:=\int \left(u_n(0,\theta)-u_n(0,\theta') \right)^2\eta(d\theta)\eta(d\theta')\leq 4K\EE^{\bar{\gamma}_n}(u_n,u_n)\leq 4KM. 
\]
We have
\[
\EE^{\bar{\gamma}}(u,u) \leq \liminf_{n\rightarrow \infty}\EE^{\bar{\gamma}}(u_n,u_n) =\liminf_{n>N, n\rightarrow \infty}\left(\EE^{\bar{\gamma}_n}(u_n,u_n)+ \left(\frac{1}{4\bar{\gamma}}-\frac{1}{4\bar{\gamma}_n}\right)\cdot \mathscr{U}_n\right).
\]
Since the second term in the right-hand side is not greater than
\[
\left|\frac{1}{4\bar{\gamma}}-\frac{1}{4\bar{\gamma}_n}\right| \cdot 4KM\rightarrow 0 
\]
as $\bar{\gamma}_n\rightarrow \bar{\gamma}$, we obtain $\EE^{\bar{\gamma}}(u,u)\leq\liminf_{n>N, n\rightarrow \infty}\EE^{\bar{\gamma}_n}(u_n,u_n)=\liminf_{n\rightarrow \infty}\EE^{\bar{\gamma}_n}(u_n,u_n)$. 

To show the second item in the definition of Mosco convergence, let $u\in H$ be such that $\EE^{\bar{\gamma}}(u,u)<\infty$. This implies $u\in \FF^{\bar{\gamma}}=\FF^{\bar{\gamma}_n}$. We only need to take $u_n:=u$, since 
\[
\EE^{\bar{\gamma}_n}(u,u) =\EE^{\bar{\gamma}}(u,u)+ \left(\frac{1}{4\bar{\gamma}_n}-\frac{1}{4\bar{\gamma}}\right)\int \left(u(0,\theta)-u(0,\theta') \right)^2\eta(d\theta)\eta(d\theta')
\] 
and the second term in the right-hand side converges to $0$ as $\bar{\gamma}_n\rightarrow \bar{\gamma}$. 
\end{proof}

\bibliographystyle{abbrv}
\bibliography{SNWBM}

\end{document}